\documentclass[reqno,11pt]{amsart}

\usepackage{tikz}
\usetikzlibrary{arrows.meta,positioning}
\usepackage[utf8]{inputenc}
\usepackage[english]{babel}
\usepackage{enumerate}
\usepackage{graphicx}
\usepackage{xcolor}
\usepackage[colorlinks=true,linkcolor=blue,citecolor=green!50!black,urlcolor=purple]{hyperref}
\usepackage[left=2.5cm,right=2.5cm,bottom=2cm]{geometry}
\usepackage{amsmath,amssymb,amsthm,mathtools}
\usepackage{mathrsfs}
\usepackage[nameinlink,noabbrev]{cleveref}
\usepackage{comment}

\newcommand{\R}{\mathbb{R}}
\newcommand{\N}{\mathbb{N}}
\newcommand{\T}{\mathbb{T}}
\renewcommand{\S}{\mathbb{S}}
\newcommand{\Z}{\mathbb{Z}}
\newcommand{\C}{\mathbb{C}}

\newcommand{\id}{\mathrm{Id}}

\renewcommand{\Re}{\operatorname{Re}}
\renewcommand{\Im}{\operatorname{Im}}

\newcommand{\PWC}{\operatorname{PWC}}

\DeclareMathOperator{\Ad}{Ad}
\DeclareMathOperator{\Diff}{Diff}
\DeclareMathOperator{\Lie}{Lie}
\DeclareMathOperator{\tr}{tr}

\newtheorem{theorem}{Theorem}[section]
\newtheorem{lemma}{Lemma}[section]
\newtheorem{proposition}{Proposition}[section]
\newtheorem{corollary}{Corollary}[section]

\theoremstyle{definition}
\newtheorem{definition}{Definition}[section]

\theoremstyle{remark}
\newtheorem{remark}{Remark}[section]

\crefname{theorem}{Theorem}{Theorems}
\crefname{lemma}{Lemma}{Lemmas}
\crefname{proposition}{Proposition}{Propositions}
\crefname{corollary}{Corollary}{Corollaries}
\crefname{assumption}{Assumption}{Assumptions}
\crefname{definition}{Definition}{Definitions}
\crefname{example}{Example}{Examples}
\crefname{remark}{Remark}{Remarks}

\begin{document}

\title[Small-time controllability on homogeneous spaces]
{Small-time approximate controllability of bilinear Schr\"odinger
equations on compact normal homogeneous spaces}

\author[H. Parada]{ Hugo Parada}
\address{Universit\'e de Lorraine, CNRS, Inria, IECL,
F-54000 Nancy, France}
\email{hugo.parada@inria.fr}
\thanks{This project is funded by the Agence Nationale de la Recherche through the QuBiCCS project (ANR-24-CE40-3008)}

\begin{abstract}
We prove small-time global approximate controllability in $L^2$ for bilinear Schr\"odinger equations on compact connected normal homogeneous spaces. The controls act through finitely many smooth real-valued potentials. We first construct finite saturating families on compact connected Lie groups endowed with bi-invariant metrics, using Peter--Weyl theory and the Casimir operator. Their phase saturation contains the representative algebra and is therefore dense in $L^2$. We extend this construction to normal homogeneous quotients using finite generation of the algebra of highest-weight functions. Global controllability follows by combining the phase-to-gradient transport argument of \cite{BP25} with the gradient generation theorem of \cite{KMP26} and Moser's transport of positive densities.
\end{abstract}

\maketitle

\section{Introduction and main results}
\label{sec:intro}
Let $(M,g)$ be a smooth, connected, compact and boundaryless Riemannian manifold. We consider the bilinear Schr\"odinger equation
\begin{equation}
\label{eq:Sch}
\begin{cases}
i\partial_t\psi(t)=\displaystyle\left(-\Delta_g+V+\sum_{j=1}^m u_j(t)W_j\right)\psi(t),&t\in(0,T),\\
\psi(0)=\psi_0\in L^2(M,\C),
\end{cases}
\end{equation}
where $V\in L^\infty(M,\R)$, $W_1,\ldots,W_m\in C^\infty(M,\R)$, and $u=(u_1,\ldots,u_m)\in\PWC([0,T];\R^m)$ is piecewise constant. The solution $\psi(t;\psi_0,u)$ is well-defined and preserves the $L^2$ norm. Thus, the natural state space is
\[\S_M:=\{\psi\in L^2(M,\C):\|\psi\|_{L^2}=1\}.\]
We study the control of Schr\"odinger equations through finitely many spatial potentials. Our aim is to approximate arbitrary states in arbitrarily small time by exploiting the geometry of compact Lie groups and normal homogeneous spaces.

\subsection{Literature review}
Approximate controllability is a natural question in the framework of bilinear systems in infinite dimension. Indeed, the non-controllability theorem of \cite[Theorem~3.6]{BMS82} implies that bounded multiplicative control operators and $L^p$ controls, $p>1$, cannot yield exact controllability on the infinite-dimensional $L^2$ unit sphere. Positive results for approximate controllability for discrete-spectrum Schr\"odinger equations were obtained in \cite{CMSB09} under spectral and coupling conditions. Inspired by the geometric control method of Agrachev and Sarychev for the Navier--Stokes and Euler equations with finite-dimensional additive controls~\cite{AS05,AS06}, a multiplicative saturation method was developed for nonlinear Schr\"odinger equations on $\T^d$; see \cite[Theorem~2.2]{DN25}. This method generates phase multipliers from the controlled potentials through a saturation procedure. In simple terms, the existence of a saturating family, together with suitable PDE estimates, ensures small-time approximate controllability of phases. This sufficient condition was extended to linear Schr\"odinger equations on closed Riemannian manifolds and an explicit finite saturating family on $S^2$ was constructed; see \cite[Theorems~2, 3 and~6]{CP23}. Related developments include bilinear electromagnetic controls on $\R^N$~\cite{DP25}, as well as manifolds with boundary and quantum graphs~\cite{BLS25}. In the compact setting, the explicit examples in these references include the torus $\T^d$, the sphere $S^2$ and a figure-eight quantum graph. Phase multiplication preserves the pointwise modulus of a wave function. In \cite{BP25} global controllability was obtained by combining phase control with transport by diffeomorphisms, building on Thurston's simplicity theorem for diffeomorphism groups and Moser's theorem on volume forms. \medskip

\noindent The construction problem addressed here is therefore the following: \emph{Can one choose finitely many smooth potentials whose phase saturation is dense on a given compact manifold?} This question in a general setting was catalogued as an ambitious problem in \cite{BP25}. We give an answer to this question for compact connected Lie groups with bi-invariant metrics and for their normal homogeneous quotients. Our construction uses matrix coefficients, the Casimir operator and highest-weight representations. It produces a saturation space containing the full representative algebra and yields global $L^2$ approximate controllability in arbitrarily small time. This representation-theoretic construction also provides a step towards a bilinear counterpart to \cite{BDP26}, where small-time approximate controllability for nonlinear PDEs on compact manifolds was proved in the case of finite-dimensional additive control. In a related work \cite{BDP26-2}, we actually solve this problem for a general compact Riemannian manifold, with an argument based on Nash's embedding. Here, we use fewer control profiles and, in some sense, more explicit ones\footnote{For instance, on the round sphere $S^d$ and on $SO(3)$ we use $d+2$ and ten potentials, respectively (seven suffice on $SO(3)$), while the construction of \cite{BDP26-2} gives upper bounds of $3^{d+1}$ and $3^9=19683$, using isometric embeddings into $\R^{d+1}$ and $\R^9$, respectively.}.\\

\noindent More precisely, here we let $G$ be a compact connected Lie group with a bi-invariant Riemannian metric and let $K\subset G$ be a closed subgroup. We equip $M=G/K$ with the quotient metric for which the projection $G\to G/K$ is a Riemannian submersion; such a metric is called \emph{normal homogeneous}. This setting includes compact Lie groups themselves, by taking $K=\{1\}$, as well as spheres, projective spaces and Grassmannians with their standard normal homogeneous metrics. Lie groups and homogeneous spaces arise naturally in Schr\"odinger dynamics and quantum control. For instance, the rotational dynamics of a rigid linear molecule can be described by a Schr\"odinger equation on $S^2\simeq SO(3)/SO(2)$~\cite{CP23}, while $SO(3)$ is the configuration space of a rigid rotor~\cite{KG22}. Bilinear control of molecular rotations on $SO(3)$ has been studied for symmetric and asymmetric molecules in~\cite{BPS21,Poz22}, respectively. We use 
\[SO(3):=\left\{R\in M_3(\mathbb R):R^\top R=I_3,\ \det R=1\right\},\]
as a running example of the group construction. Unit quaternions describe rotor orientations through the universal covering $\Pi:SU(2)\simeq\operatorname{Spin}(3)\to SO(3)$: $q$ and $-q$ represent the same orientation. A scalar wave function $\psi$ therefore lifts to $\widetilde\psi=\psi\circ\Pi$, with $\widetilde\psi(-q)=\widetilde\psi(q)$ almost everywhere. This explains why only integer angular momentum representations occur in our $SO(3)$ example; see \Cref{rem:quaternion-cover} for the quaternionic realization. Group actions and representations have also been exploited in geometric
control theory: generic controllability of systems with symmetries is studied
in \cite{AL26}, and in \cite{AgrachevKazandjianPozzoli2025} a unitary representation of $SL_2(\mathbb{R})\ltimes H_d(\mathbb{R})$ is used to obtain
small-time reachability results for the quantum harmonic oscillator. There the group acts as a control group on $\mathbb{R}^d$, while in our case $G/K$ is the configuration space itself.

\subsection{Main results}

\begin{definition}[$L^2$-STAC]
We say that \eqref{eq:Sch} is small-time globally approximately controllable in $L^2$, or $L^2$-STAC, if, for every $\psi_0,\psi_1\in\S_M$ and $\varepsilon>0$, there exist $T\in[0,\varepsilon]$ and $u\in\PWC([0,T];\R^m)$ such that
\[\|\psi(T;\psi_0,u)-\psi_1\|_{L^2}<\varepsilon.\]
\end{definition}

The first step is to control spatially varying phases. Following \cite[Definitions~2, 6 and~12]{BP25}, we formulate this controllability up to a constant global phase.

\begin{definition}[STAC of phases]
We say that \eqref{eq:Sch} satisfies the small-time approximate controllability of phases if, for every $\psi_0\in\S_M$, $\varphi\in L^2(M,\R)$ and $\varepsilon>0$, there exist $T\in[0,\varepsilon]$, $\theta\in[0,2\pi)$ and $u\in\PWC([0,T];\R^m)$ such that
\begin{equation}
\label{eq:stac-phase}
\bigl\|\psi(T;\psi_0,u)-e^{i\theta}e^{i\varphi}\psi_0\bigr\|_{L^2}<\varepsilon.
\end{equation}
\end{definition}
The main results of this work are the following:
\begin{theorem}[Phase control on compact Lie groups]
\label{thm:main-phases-G}
Let $M=G$ be a compact connected Lie group endowed with a bi-invariant Riemannian metric. Let $\mathcal W_G$ be the finite-dimensional space defined in \eqref{eq:control-space-G}. For every $V\in L^\infty(G,\R)$, equation \eqref{eq:Sch}, with controlled potentials given by a real basis of $\mathcal W_G$, is small-time approximately controllable in phase.
\end{theorem}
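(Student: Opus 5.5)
The plan is to reduce the statement to the abstract saturation criterion of \cite[Theorem~2.2]{DN25}, in the form extended to closed Riemannian manifolds in \cite[Theorems~2 and~3]{CP23} and to the phase formulation in \cite{BP25}. The criterion says the following. Let $\mathcal H_0=\mathcal W_G$. Define recursively $\mathcal H_{k+1}$ as the largest vector space spanned by $\mathcal H_k$ together with the functions $\phi_0-\sum_{j}|\nabla_g\phi_j|^2$, where $\phi_0,\phi_1,\dots\in\mathcal H_k$. Put $\mathcal H_\infty=\bigcup_k\mathcal H_k$. If $\mathcal H_\infty$ is dense in $L^2(G,\R)$, then phases $e^{i\varphi}$ with $\varphi\in\mathcal H_\infty$ are reachable in arbitrarily small time up to a global phase. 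Density then passes this to all $\varphi\in L^2$. The PDE estimates needed there (small-time limits $e^{-i\tau(-\Delta+V+\tau^{-1}W)}\to e^{-iW}$, and conjugation $e^{-i\phi}e^{i\delta\Delta}e^{i\phi}\approx e^{-i\delta|\nabla\phi|^2}$ up to lower-order terms) are stated for smooth $W_j$ and $V\in L^\infty$ on compact manifolds, so they apply verbatim here. The proof therefore reduces to a purely algebraic statement: the saturation of $\mathcal W_G$ contains the representative algebra $\mathcal R(G)$ of real parts of matrix coefficients. By Peter--Weyl, $\mathcal R(G)$ is dense in $L^2(G,\R)$.

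For the algebraic part I would use the polarization identity
\[
|\nabla(f+h)|^2-|\nabla f|^2-|\nabla h|^2=2\langle\nabla f,\nabla h\rangle .
\]
Combined with $\Delta(fh)=f\Delta h+h\Delta f+2\langle\nabla f,\nabla h\rangle$, it gives products once $\Delta$-images are available. On a group with bi-invariant metric, $\Delta$ equals the Casimir operator. It acts on each isotypic block of matrix coefficients $\pi_{ij}$ by the scalar $-c_\pi$, and the Laplacian preserves each block. So if the saturation contains the finite-dimensional blocks $E_\pi$ for $\pi$ in the generating set defining $\mathcal W_G$ (presumably a faithful representation and its dual, or a finite generating set of representations, together with constants), then $\langle\nabla f,\nabla h\rangle$ for $f,h\in E_\pi$ lies in the saturation. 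Then $fh=\tfrac12\bigl(\Delta(fh)+c_\pi fh+c_\pi fh\bigr)$-type identities let one isolate $fh$ block by block. The product $fh$ decomposes into components in $E_\sigma$ for $\sigma\subset\pi\otimes\pi'$. Applying the identity
\[
2\langle\nabla f,\nabla h\rangle=\Delta(fh)+(c_\pi+c_{\pi'})fh
\]
gives $\sum_\sigma(c_\pi+c_{\pi'}-c_\sigma)(fh)_\sigma$. Using also that the saturation is a vector space closed under these operations, one iterates to get the individual isotypic components: the distinct Casimir eigenvalues allow a Vandermonde-type separation, by applying the gradient pairing again with fixed low-degree elements, or by using linear combinations over varying pairs. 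By induction on tensor degree one then gets all products of generators. Since the matrix coefficients of a faithful representation and its dual generate $\mathcal R(G)$ as an algebra (Peter--Weyl / Tannaka), this yields $\mathcal R(G)\subset\mathcal H_\infty$.

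The main obstacle is this separation step. The gradient pairing gives only the combination $\sum_\sigma(c_\pi+c_{\pi'}-c_\sigma)(fh)_\sigma$, and a coefficient could vanish. This happens exactly when $c_\sigma=c_\pi+c_{\pi'}$, as for the top component, where the highest weights add and the cross term in the Casimir vanishes only in degenerate cases. To handle it I would check, using $c_\lambda=\langle\lambda,\lambda+2\rho\rangle$, that $c_{\lambda+\mu}-c_\lambda-c_\mu=2\langle\lambda,\mu\rangle\ge0$. Equality happens only for orthogonal weights, for example in different simple factors. In that case the pointwise product $fh$ itself is a genuine gradient-free product, and it must be obtained differently. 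I would include enough products in $\mathcal W_G$ (the specific definition \eqref{eq:control-space-G} presumably does so), or treat the torus/center factor separately, as in the $\T^d$ construction of \cite{DN25}. For the other components the coefficients are nonzero. Recovering each $E_\sigma$ then follows from $\mathrm{Ad}$-equivariance, since the span of $\langle\nabla f,\nabla h\rangle$ is $G\times G$-invariant and hence a sum of full blocks. Finally, I would verify that everything stays real-valued by taking real and imaginary parts, which both lie in $\mathcal R(G)$.
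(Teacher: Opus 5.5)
Your reduction to the saturation criterion and the final appeal to Peter--Weyl match the paper. The algebraic core, however, has a genuine gap. You use the saturation as if it were a vector space closed under $\Gamma$, and that is exactly what needs proof. In the criterion you cite, quadratic terms come with one sign only. (That criterion is the largest vector space inside the cone of functions $\phi_0-\sum_j|\nabla\phi_j|^2$; in the paper it is the cone $\mathcal H_\infty$ built from terms $-\alpha B(f)$, $\alpha\geq0$.) Conjugating the free flow by large phases produces $e^{-i\delta|\nabla\phi|^2}$, and the free flow cannot be run backwards. Your polarization step $2\Gamma(f,h)=B(f+h)-B(f)-B(h)$ needs $+B(f)$ and $+B(h)$, which the criterion does not supply. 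If ``spanned by'' is meant literally, the criterion is misquoted and is not what the cited PDE estimates give. The missing idea is \Cref{lem:recovery-signs}:
\begin{itemize}
\item $\mathcal V:=\mathcal H_\infty\cap(-\mathcal H_\infty)$ is a $G\times G$-invariant vector space containing the constants.
\item If $E\subset\mathcal V$ is finite-dimensional and invariant, with $L^2$-orthonormal basis $e_1,\dots,e_N$, then $\sum_jB(e_j)$ is basis-independent and invariant under the transitive isometric action. Hence it equals a constant $C_E$, and $B(e_1)=C_E+\sum_{j\geq2}(-B(e_j))\in\mathcal H_\infty$.
\item This gives both signs of $B(f)$ for $f\in E$, hence of $\Gamma(f,h)$ by polarization. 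Induction over $\mathcal S_n(\mathcal W_G)$ then yields closure under $\Gamma$.
\end{itemize}
This step uses that $\mathcal W_G$ is a $G\times G$-invariant sum of full blocks containing $1$, which is why $\mathcal W_G$ is defined that way.

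The second gap is the bookkeeping of vanishing coefficients. Your equivariance remark is correct. An invariant subspace of $\mathcal R(G)$ is a sum of blocks $\mathcal E_\sigma$, and the $\mathcal E_\sigma$-component of $\Gamma(f,h)$ is $\tfrac12(c_\pi+c_{\pi'}-c_\sigma)(fh)_\sigma$. So $\Gamma$ produces exactly the blocks with nonzero coefficient. But your route needs full products $fh$ in order to invoke algebra generation by a faithful representation. Your claim that only the top component can have a zero coefficient is false. On $U(2)$, for a suitable ratio between the central and $\mathfrak{su}(2)$ parts of the bi-invariant metric, one has $2c_{\C^2}=c_{\det}$. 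Then $\Gamma$ of standard coefficients has no determinant component. Also, $\langle\lambda,\mu\rangle_*\geq0$ fails for toral weights; only $\neq0$ matters. ``Including enough products'' must be shown to require only finitely many additions, all lying in $\mathcal W_G$, and you do not show this. The paper avoids products altogether:
\begin{itemize}
\item It takes a finite generating set $\mathcal D_G=\{\alpha_1,\dots,\alpha_s\}$ of the monoid $\Lambda_G^+$.
\item It inducts on $|n|$ for $\alpha(n)=\sum_jn_j\alpha_j$, using only Cartan components (\Cref{lem:Cartan-propagation}). These are reached whenever some $j$ with $n_j>0$ satisfies $\langle\alpha(n)-\alpha_j,\alpha_j\rangle_*\neq0$.
\item It proves in \Cref{lem:finite-exceptional-set} that the remaining tuples form a finite set $\mathcal J_G$, and $\mathcal W_G$ contains $\mathcal E_{\alpha(n)}$ for exactly those $n$.
\end{itemize}
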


\begin{theorem}[Phase control on normal homogeneous spaces]
\label{thm:main-phases-homogeneous}
Let $M=G/K$ be a compact, connected normal homogeneous space as above. There exists a finite-dimensional $G$-invariant space $\mathcal W_M\subset C^\infty(M,\R)$, containing $1$, such that, for every $V\in L^\infty(M,\R)$, equation \eqref{eq:Sch}, with controlled potentials given by a real basis of $\mathcal W_M$, is small-time approximately controllable in phase.
\end{theorem}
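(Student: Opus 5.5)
The plan is to reduce \Cref{thm:main-phases-homogeneous} to the saturation criterion of \cite{CP23,BP25}. That criterion says it suffices to find a finite-dimensional space $\mathcal W_M\subset C^\infty(M,\R)$ whose saturation $\mathcal H_\infty$ is dense in $L^2(M,\R)$. Here $\mathcal H_0=\mathcal W_M$ and
\[
\mathcal H_{j+1}=\operatorname{span}\{\varphi-\textstyle\sum_k |\nabla\xi_k|^2:\ \varphi,\xi_k\in\mathcal H_j\}.
\]
Equivalently, $\mathcal H_{j+1}$ is the largest space built from $\mathcal H_j$ and the functions $\langle\nabla f,\nabla h\rangle_g$, by polarization. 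The phase statement then follows for every $V\in L^\infty$, exactly as in the proof of \Cref{thm:main-phases-G}.

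\textbf{Step 1: lifting.} Pull functions back along the Riemannian submersion $\pi:G\to G/K$. The pullback identifies $C^\infty(M)$ with the right-$K$-invariant functions on $G$. For $K$-invariant $f,h$ we have $\langle\nabla f,\nabla h\rangle_M\circ\pi=\langle\nabla(f\circ\pi),\nabla(h\circ\pi)\rangle_G$, because gradients of $K$-invariant functions are horizontal. Hence saturation commutes with $\pi^*$, and all computations can be carried out on $G$ inside the space $\mathcal R(G)^K$ of right-$K$-invariant representative functions. By Peter--Weyl this space is dense in $L^2(M)$. The goal becomes: choose $\mathcal W_M\subset\mathcal R(G)^K$, finite-dimensional, $G$-invariant (left translations), containing $1$, with $\mathcal H_\infty\supset\mathcal R(G)^K$.

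\textbf{Step 2: products from gradients.} On $G$ with a bi-invariant metric, let $X_a$ be an orthonormal basis of left-invariant fields. Then $\langle\nabla f,\nabla h\rangle=\sum_a X_af\,X_ah$ and $\tfrac12\Delta(fh)=f\Delta h/2+h\Delta f/2+\langle\nabla f,\nabla h\rangle$. If $\mathcal H_j$ is a $G$-invariant finite sum of isotypic pieces, the Casimir $\Delta$ acts on each piece by a scalar. This lets us separate the pieces inside $\mathcal H_j$, since it is a finite sum of eigenspaces. The key claim I would prove is that the span of $\{\langle\nabla f,\nabla h\rangle\}$ over $f,h$ in $G$-invariant subspaces $E,F$ contains the $G$-module $E\cdot F$, up to the components killed by the Casimir combination. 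I would use the identity $\langle\nabla f,\nabla h\rangle=\tfrac12(\Delta(fh)-f\Delta h-h\Delta f)$ on products of eigenfunctions of eigenvalues $\lambda,\mu$. This gives $\tfrac12(\Delta-\lambda-\mu)(fh)$, and projecting $fh$ onto isotypic components with eigenvalue $\nu\neq\lambda+\mu$ recovers those components of $E\cdot F$. For the exceptional components with $\nu=\lambda+\mu$ I would use translation invariance and the fact that $1\in\mathcal W_M$ (so $E\subset\mathcal H_j$ is preserved).

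\textbf{Step 3: finite generation, which I expect to be the main obstacle.} The algebra of highest-weight (spherical) functions on $G/K$ is finitely generated: it is graded by a finitely generated monoid of dominant weights $\Lambda_K^+$, and Cartan product $V_\lambda\cdot V_\mu\supset V_{\lambda+\mu}$. I would take $\mathcal W_M$ to be $1$ plus the $G$-modules generated by the real and imaginary parts of a finite generating set of highest-weight functions. Then I would show by induction on the weight that every $V_\lambda\subset\mathcal R(G)^K$ lies in $\mathcal H_\infty$. The danger is precisely that the top Cartan component $\lambda+\mu$ has Casimir eigenvalue $\langle\lambda+\mu,\lambda+\mu+2\rho\rangle\ne\lambda$-eigenvalue$+\mu$-eigenvalue generically, which is good. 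Lower components may coincide in eigenvalue, so the Step 2 projection could fail for them. To handle this, I would obtain top components directly: $\langle\nabla f_\lambda,\nabla f_\mu\rangle$ for highest-weight vectors is a highest-weight vector of weight $\lambda+\mu$, with nonzero coefficient $\langle\lambda,\mu\rangle$-type constant. This gives all of $V_{\lambda+\mu}$ by $G$-invariance. All $V_\nu$ with $\nu\in\Lambda_K^+$ arise as iterated Cartan components of generators, so density follows. Where $\langle\lambda,\mu\rangle=0$, I would replace a generator by a sum of generators to avoid degeneracy.
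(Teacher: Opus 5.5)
You correctly identify the main ingredients the paper also uses: the lift along the Riemannian submersion, finite generation of the highest-weight algebra, the identity $\Gamma(f_\lambda,f_\mu)=-\langle\lambda,\mu\rangle_* f_\lambda f_\mu$ for highest-weight vectors, and $G$-invariance to pass from highest-weight vectors to all of $\mathcal R(M)$. There are, however, two genuine gaps.

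\emph{Signs.} The saturation in the criterion is not a vector space closed under $\Gamma$. Phase conjugation only produces $-\alpha|\nabla f|^2$ with $\alpha\geq0$, so the sets $\mathcal H_n$ are convex cones. Your span definition, and the claim that it is ``equivalently'' closed under $\langle\nabla f,\nabla h\rangle_g$ by polarization, assume exactly what must be proved. Polarization $\Gamma(f,h)=\tfrac14\bigl(|\nabla(f+h)|^2-|\nabla(f-h)|^2\bigr)$ needs $+|\nabla f|^2$. So does taking real and imaginary parts of $\Gamma$ on complex highest-weight vectors. In the ``largest vector space'' formulation, $|\nabla f|^2$ lies in $\mathcal H_{j+1}$ only if it can itself be written as $\varphi-\sum_k|\nabla\xi_k|^2$ with $\varphi,\xi_k\in\mathcal H_j$.

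The missing idea is an energy balance. $\mathcal H_\infty$ is a convex cone, so $\mathcal V:=\mathcal H_\infty\cap(-\mathcal H_\infty)$ is a $G$-invariant vector space. Let $E\subset\mathcal V$ be finite-dimensional and $G$-invariant, with $L^2$-orthonormal basis $e_1,\ldots,e_N$. Then $\sum_j|\nabla e_j|^2$ is $G$-invariant, hence a constant $C_E$ by transitivity. Because $1\in\mathcal W_M$, this gives $|\nabla e_1|^2=C_E-\sum_{j\geq2}|\nabla e_j|^2\in\mathcal H_\infty$, and polarization now works inside $\mathcal V$. Induction over the finite-dimensional invariant stages puts the whole $\Gamma$-closure of $\mathcal W_M$ into $\mathcal V$. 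This is why $\mathcal W_M$ must be $G$-invariant and contain $1$; in your Step~2 the constants play no such role.

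\emph{Degenerate pairs.} Your fix for $\langle\lambda,\mu\rangle_*=0$ does not work, and neither does $\mathcal W_M$ as you define it. Take $M=G=\T^2$ with the flat metric and generators $e^{\pm ix},e^{\pm iy}$. Your space is then $\operatorname{span}\{1,\cos x,\sin x,\cos y,\sin y\}$. The non-dense subspace $\{f(x)+g(y)\}$ is stable under $\Gamma$, since $\Gamma(f_1(x)+g_1(y),f_2(x)+g_2(y))=f_1'f_2'+g_1'g_2'$. So even with both signs the saturation is not dense. Replacing $e^{ix}$ by $e^{ix}+e^{iy}$ changes nothing, because its translates span the same space.

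What is needed is to put into $\mathcal W_M$ the translates of every monomial $q^n$ with $n$ in the exceptional set $\mathcal J=\{n\neq0:\langle\alpha(n)-\alpha_j,\alpha_j\rangle_*=0\text{ whenever }n_j>0\}$. You must also prove $\mathcal J$ finite, so that $\mathcal W_M$ stays finite-dimensional. For a fixed support $I$, the equations $\langle\alpha(n),\alpha_j\rangle_*=\langle\alpha_j,\alpha_j\rangle_*$, $j\in I$, determine $\alpha(n)$ uniquely in $\operatorname{span}\{\alpha_j\}_{j\in I}$. Then $\langle\alpha(n),\alpha(n)\rangle_*=\sum_{j\in I}n_j\langle\alpha_j,\alpha_j\rangle_*$ bounds $n$.

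The induction should then run over monomials $q^n$ by $|n|$, not ``on the weight''. For $n\notin\mathcal J$, pick $j$ with $n_j>0$ and $\langle\alpha(n)-\alpha_j,\alpha_j\rangle_*\neq0$, and use $\Gamma(q^{n-e_j},q_j)=-\langle\alpha(n)-\alpha_j,\alpha_j\rangle_*q^n$. Working with monomials also handles multiplicities, since every highest-weight vector in $\mathcal R(M)$ is a combination of monomials of that weight.
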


The two constructions are given in \Cref{sec:finite-saturation-groups,sec:finite-saturation-homogeneous}. The passage to global control is expressed by the following result, which does not require a homogeneous structure.

\begin{proposition}[Global control]
\label{prop:main-global}
Let $(M,g)$ be a smooth, connected, closed Riemannian manifold, let $V\in L^\infty(M,\R)$, and let $W_1,\ldots,W_m\in C^\infty(M,\R)$. Assume that $1\in\operatorname{span}_{\R}\{W_1,\ldots,W_m\}$. If \eqref{eq:Sch} is small-time approximately controllable in phase, then it is $L^2$-STAC.
\end{proposition}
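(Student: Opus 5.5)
We follow the scheme of \cite{BP25}: phase control, then transport of the modulus by diffeomorphisms. Fix $\psi_0,\psi_1\in\S_M$ and $\varepsilon>0$. By density and continuity of the flow, we may assume $\psi_0=\rho_0^{1/2}e^{i\alpha_0}$ and $\psi_1=\rho_1^{1/2}e^{i\alpha_1}$. Here $\rho_0,\rho_1\in C^\infty(M)$ are positive densities with $\int\rho_0=\int\rho_1=1$, and $\alpha_0,\alpha_1$ are smooth. The reduction to nonvanishing states costs $\varepsilon/3$; we use that smooth positive densities are dense in $L^1$ among probability densities and that $\|\sqrt f-\sqrt g\|_{L^2}^2\le\|f-g\|_{L^1}$. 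Moser's theorem then gives a diffeomorphism $P$ isotopic to the identity with $P_\#(\rho_0\,dv)=\rho_1\,dv$. The state $\psi_0$ is then steered by three operations: a phase correction, a transport $\psi\mapsto |\det DP^{-1}|^{1/2}\,\psi\circ P^{-1}$, and a final phase adjustment producing $e^{i\alpha_1}$. The two phase operations come directly from the phase-controllability hypothesis, applied to the current state with target phase $\varphi=\alpha_1-(\text{current phase})$. Global phases $e^{i\theta}$ are harmless, since they can be removed using the constant potential $1\in\operatorname{span}\{W_j\}$ over small time. Alternatively they can simply be absorbed, because $\theta$ can be chosen freely modulo $2\pi$ by that same constant control.

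The core step is transport. First, factor the isotopy into flows of gradient vector fields. By the gradient generation theorem of \cite{KMP26}, diffeomorphisms isotopic to the identity are approximated by compositions of time-$\tau$ flows $e^{\tau\nabla\phi_k}$ of gradients of smooth functions $\phi_k$. Second, realize each gradient flow via the phase-to-gradient argument of \cite{BP25}: conjugating the free evolution by phases gives
\[
e^{-i\phi/\delta}\,e^{i\delta\Delta_g}\,e^{i\phi/\delta}\psi\approx \text{transport of }\psi\text{ along the flow of }2\nabla\phi\text{ over unit time}
\]
(up to a harmless quadratic phase $|\nabla\phi|^2/\delta$, which is itself a phase and hence controllable). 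The approximation holds in $L^2$ as $\delta\to0$, uniformly on compact sets of smooth $\psi$. Here $e^{i\delta(\Delta_g-V)}$ is obtained by setting all controls to zero over a time $\delta$, and $V\in L^\infty$ contributes only an $O(\delta)$ error. The phases $e^{\pm i\phi/\delta}$ are produced approximately, in arbitrarily small time, by phase controllability. Iterating this finitely many times yields the transport operator of $P$, applied up to error $\varepsilon/3$, in total time $\le\varepsilon$.

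The main obstacle is the error bookkeeping in the conjugation step. The phases have size $1/\delta$, so a phase error of $\eta$ in $L^2$ for $e^{i\phi/\delta}\psi$ must be controlled before and after the free evolution. Because all operators involved are unitary, errors add rather than amplify: $\|U_1U_2\psi-\tilde U_1\tilde U_2\psi\|\le\|U_2\psi-\tilde U_2\psi\|+\|U_1(\tilde U_2\psi)-\tilde U_1(\tilde U_2\psi)\|$. The second term requires phase control that is uniform near the state $\tilde U_2\psi$. We handle this by approximating intermediate states by exact ones, using the continuity estimate above. We choose $\delta$ first, then the phase-approximation tolerances. The semiclassical limit itself is the content of \cite{BP25}, which we invoke rather than reprove.
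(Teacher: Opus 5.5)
Your overall scheme follows the paper: density of smooth nonvanishing states, Moser, \cite{KMP26} to reduce $\Diff_0(M)$ to gradient flows, error propagation by unitarity, and removal of the global phase with the constant potential. Your short constant pulse works for that last step; the paper instead shifts $u_j\mapsto u_j+\theta a_j/T$ along the existing trajectory, which is exact.

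\textbf{The gap is in the step that realizes a gradient flow; it is false as stated.} A single conjugation gives
\[
e^{-i\phi/\delta}e^{i\delta\Delta_g}e^{i\phi/\delta}=\exp\Bigl(i\delta\Delta_g-2T_{\nabla_g\phi}-\frac{i}{\delta}B(\phi)\Bigr).
\]
The multiplication term $B(\phi)/\delta$ has size $1/\delta$ and does not commute with $T_{\nabla_g\phi}$, so it cannot be pulled out as an outside phase $e^{-iB(\phi)/\delta}$. Semiclassically ($\hbar=\delta$), this operator transports along the characteristics of $|\xi|_g^2$, i.e.\ by the geodesic map $x\mapsto\exp_x(2\nabla_g\phi(x))$, times a Hamilton--Jacobi phase. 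This holds only as long as no caustic forms. A local model makes this concrete: on $\R$ with $\phi=x^2/2$, a Fresnel computation gives the exact identity
\[
e^{-i\phi/\delta}e^{i\delta\partial_x^2}e^{i\phi/\delta}\psi(x)=3^{-1/2}e^{-ix^2/(3\delta)}\bigl(e^{i\delta\partial_x^2/3}\psi\bigr)(x/3).
\]
The limit is therefore transport by $x\mapsto 3x$ with phase $-x^2/(3\delta)$. By contrast, the time-one flow of $2\nabla\phi=2x$ is $x\mapsto e^2x$, and your predicted phase is $-x^2/\delta$. So your step produces the wrong diffeomorphisms, and \cite{BP25} does not support the displayed approximation.

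\textbf{The missing idea is to cancel the quadratic term inside the generator, not after the evolution.} As in the paper's proof of \Cref{thm:gradient-flows-short}, insert the correcting phase at every Trotter step. Let $C_\delta$ be the closure of $i\delta(\Delta_g-V)+T_{\nabla_g\varphi}$. Then
\[
\Bigl(e^{iB(\varphi)/(4\delta n)}e^{i\varphi/(2\delta)}e^{i\delta(\Delta_g-V)/n}e^{-i\varphi/(2\delta)}\Bigr)^n\longrightarrow e^{C_\delta}\quad\text{strongly as } n\to\infty.
\]
Each phase factor is approximately reachable in arbitrarily small time by phase control, and the free factors take total time $\delta$. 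Since $C_\delta\to T_{\nabla_g\varphi}$ on the common core $C^\infty(M,\C)$, you get $e^{C_\delta}\to e^{T_{\nabla_g\varphi}}=\mathcal U_{\Phi^1_{\nabla_g\varphi}}$ strongly as $\delta\downarrow0$.

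Your single-conjugation version could alternatively be repaired: apply it to $\phi/N$, compensate the smooth Hamilton--Jacobi phase exactly, send $\delta\to0$, then $N\to\infty$. That route needs WKB estimates you do not supply.

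Finally, the approximation of $P$ by compositions of gradient flows must hold in $C^1$, not just $C^0$. Otherwise the factors $J_{P_n}^{1/2}$ need not converge and $\mathcal U_{P_n}\psi$ need not tend to $\mathcal U_P\psi$. The flow product formulas behind \cite[Corollary~1]{KMP26} provide this $C^1$ convergence.
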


The main contribution of this paper is the construction of finite saturating families on compact connected Lie groups with bi-invariant metrics and on normal homogeneous spaces. The passage from phase control to global controllability in \Cref{prop:main-global} combines the gradient transport argument of \cite{BP25} with the gradient generation theorem of \cite[Theorem~A]{KMP26}. We give the proof on closed Riemannian manifolds in \Cref{sec:global-control-short}.

\begin{corollary}[Global control with finitely many potentials]
\label{cor:main-global}
Under the assumptions of \Cref{thm:main-phases-G}, respectively \Cref{thm:main-phases-homogeneous}, equation \eqref{eq:Sch} is $L^2$-STAC for every $V\in L^\infty(M,\R)$ when the controlled potentials form a real basis of $\mathcal W_G$, respectively $\mathcal W_M$.
\end{corollary}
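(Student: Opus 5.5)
The corollary follows by chaining the phase-control theorems with \Cref{prop:main-global}; the only point to check is the span hypothesis $1\in\operatorname{span}_{\R}\{W_1,\ldots,W_m\}$.

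\emph{Homogeneous case.} Under the assumptions of \Cref{thm:main-phases-homogeneous}, the controlled potentials form a real basis $W_1,\ldots,W_m$ of $\mathcal W_M$. This space is finite-dimensional, consists of smooth real-valued functions, and contains $1$ by that theorem. Hence $1\in\operatorname{span}_{\R}\{W_1,\ldots,W_m\}$. \Cref{thm:main-phases-homogeneous} gives small-time approximate controllability in phase for every $V\in L^\infty(M,\R)$. Since $M=G/K$ is a smooth, connected, closed Riemannian manifold, \Cref{prop:main-global} applies and yields $L^2$-STAC.

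\emph{Group case.} The argument is identical once we know that $1\in\mathcal W_G$, where $\mathcal W_G$ is the space defined in \eqref{eq:control-space-G}. This is the one step I have not yet verified, since that definition appears later in the paper. I expect it to hold by construction: $\mathcal W_G$ should be a finite sum of Peter--Weyl isotypic components, that is, spans of matrix coefficients, and the trivial representation contributes the constant function $1$. If the trivial component turns out not to be included explicitly, there are two fallbacks. First, the group case is the special case $K=\{1\}$ of \Cref{thm:main-phases-homogeneous}, and that theorem already provides a space containing $1$. Second, one can replace $\mathcal W_G$ by $\mathcal W_G+\R 1$. Adding a constant potential only multiplies the solution by a global phase $e^{-i\int u_0}$, so phase controllability is preserved: the phase definition allows an arbitrary constant $e^{i\theta}$, and any extra constant phase can be absorbed into $\theta$. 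With $1\in\mathcal W_G$ established, \Cref{thm:main-phases-G} combined with \Cref{prop:main-global} gives $L^2$-STAC for every $V\in L^\infty(G,\R)$, since $G$ is a compact connected Lie group and hence a closed connected Riemannian manifold.

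Nothing here is a genuine obstacle. The only point needing care is the membership $1\in\mathcal W_G$, which must be read off from \eqref{eq:control-space-G}.
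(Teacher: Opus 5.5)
Your proposal is correct and matches the paper's argument: the corollary follows by feeding \Cref{thm:main-phases-G}, respectively \Cref{thm:main-phases-homogeneous}, into \Cref{prop:main-global}. The step you left open holds by definition, because \eqref{eq:control-space-G} places $1$ explicitly in the spanning set of $\mathcal W_G$; it does not come from the trivial representation, since every weight in $\mathcal F_G$ is nonzero. Your fallbacks are therefore unnecessary, and neither would prove the statement as written, because both replace the basis of $\mathcal W_G$ by a different control family.
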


\subsection{Heuristic ideas}

The saturation mechanism is governed by the following quadratic operations: 
\begin{equation}
\label{eq:Gamma-intro}
\Gamma(f,h):=\langle\nabla f,\nabla h\rangle_g, \qquad B(f):=\Gamma(f,f)=|\nabla f|_g^2, \qquad f,h\in C^\infty(M,\R).
\end{equation}
Short conjugations by large phases generate effective potentials of the form $-B(f)$; see \cite[Theorem~2]{CP23} and \Cref{subsec:phase-saturation}. A key point is to recover both signs of these generated potentials. The basic observation is that symmetry provides finite families with constant total energy:
\[\sum_{j=1}^m B(W_j)\equiv C.\]
For example, trigonometric functions on the torus and coordinate functions on the unit sphere satisfy
\[B(\sin\langle k,x\rangle)+B(\cos\langle k,x\rangle)=|k|^2 \text{ on } \T^d, \qquad \sum_{j=1}^{d+1}B(x_j)=d \text{ on } S^{d}.\]
More generally, if $-\Delta_g W_j=\lambda W_j$ and $\sum_j W_j^2\equiv C_0$, the product rule gives $\sum_j B(W_j)=\lambda C_0$. Such identities allow the recovery of positive quadratic terms from negative ones and constant functions. Polarization then gives access to the bilinear operation $\Gamma$. On a compact Lie group, the Casimir eigenvalue formula identifies interactions under $\Gamma$ between matrix coefficients. The Cartan component of a tensor product yields a propagation rule for highest weights, from which we construct a finite family generating the full representative algebra. On a normal homogeneous quotient, we use finitely many generators of the algebra of highest-weight functions. Peter--Weyl theory then gives the required density in $L^2$. Finally, phase control yields the unitary transport operators associated with gradient flows. Lie brackets give all smooth flows by \cite[Theorem~A]{KMP26}. Approximating smooth isotopies by autonomous flows then gives control of the unitary action of $\Diff_0(M)$. Moser's transport of positive densities, followed by phase correction and an $L^2$ density argument, gives global $L^2$-STAC. A road map of the results is shown in \Cref{fig:phase-to-global}. We recall that our main objective is to \emph{construct finite saturating families on compact Lie groups and normal homogeneous spaces}.
\begin{figure}[htbp]
\centering
\begin{tikzpicture}[
    node distance=9mm and 7mm,
    box/.style={
        draw,
        rounded corners=1mm,
        align=center,
        text width=3.8cm,
        minimum height=18.5mm,
        inner sep=2.5pt,
        font=\small
    },
    arrow/.style={
        -{Latex[length=2mm]},
        line width=.6pt
    }
]

\node[box] (schrodinger)
    {Schr\"odinger equation on\\
     $M=G$ or $M=G/K$};

\node[box,right=of schrodinger] (phases)
    {STAC of phases\\[2pt]
     {\footnotesize
      \Cref{thm:main-phases-G} ($G$),\\
      \Cref{thm:main-phases-homogeneous} ($G/K$)}};

\node[box,right=of phases] (gradients)
    {STAC of gradient flows\\[2pt]
     {\footnotesize
      \Cref{thm:gradient-flows-short}\\
      adapted from\\
      \cite[Theorem~16]{BP25}}};

\node[box,below=of gradients] (flows)
    {STAC of flows\\[2pt]
     {\footnotesize
      \cite[Theorem~A]{KMP26}}};

\node[box,left=of flows] (diffeos)
    {STAC of $\operatorname{Diff}_0(M)$\\[2pt]
     {\footnotesize
      \Cref{cor:flows-diffeomorphisms-short}\\
      adapted from\\
      \cite[Theorem~13]{BP25}}};

\node[box,left=of diffeos] (global)
    {Global $L^2$-STAC\\[2pt]
     {\footnotesize
      \Cref{prop:main-global}\\
      adapted from\\
      \cite[Theorem~12]{BP25}}};

\draw[arrow] (schrodinger) -- (phases);
\draw[arrow] (phases) -- (gradients);
\draw[arrow] (gradients) -- (flows);
\draw[arrow] (flows) -- (diffeos);
\draw[arrow] (diffeos) -- (global);

\draw[arrow] (phases.south) -- ++(0,-6mm)
    -| (global.north);

\end{tikzpicture}
\caption{From phase control to global approximate controllability
in arbitrarily small time.}
\label{fig:phase-to-global}
\end{figure}

\section{Preliminaries}
\label{sec:preliminaries}
We first record the analytic framework for \eqref{eq:Sch} on a smooth, connected, closed Riemannian manifold $(M,g)$. By Stone's theorem (see \cite{Kato95}), we have the following well-posedness result.

\begin{proposition}
\label{prop:well-posedness}
For every $T>0$, $u\in\PWC([0,T];\R^m)$ and
$\psi_0\in L^2(M,\C)$, equation \eqref{eq:Sch} admits a unique mild
solution $\psi\in C([0,T];L^2(M,\C))$. For every $t\in[0,T]$, the map
$\psi_0\mapsto\psi(t;\psi_0,u)$ is unitary. In particular,
\[\|\psi(t;\psi_0,u)\|_{L^2}=\|\psi_0\|_{L^2},\qquad t\in[0,T].\]
Moreover, the solution depends continuously on $(\psi_0,u)$ in the
$L^2(M,\C)\times L^1(0,T;\R^m)$ topology, with values in
$C([0,T];L^2(M,\C))$.
\end{proposition}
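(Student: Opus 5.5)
The plan is to treat \eqref{eq:Sch}, on each interval where $u$ is constant, as an autonomous Schr\"odinger equation with Hamiltonian $H_c:=-\Delta_g+V+\sum_j c_jW_j$, $c\in\R^m$. The goal is to show $H_c$ is self-adjoint on $H^2(M)$ and generates a strongly continuous unitary group. The solution is then obtained by concatenating these groups.

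\textbf{Self-adjointness.} On a closed manifold, $-\Delta_g$ is essentially self-adjoint on $C^\infty(M)$, and its closure is self-adjoint with domain $H^2(M)$ by elliptic regularity. The potential $V+\sum_j c_jW_j$ is real-valued and in $L^\infty$, so it acts as a bounded symmetric multiplication operator on $L^2(M,\C)$. Kato--Rellich (a bounded perturbation suffices) then gives that $H_c$ is self-adjoint on $D(H_c)=H^2(M)$.

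\textbf{Construction of the solution.} Stone's theorem yields the unitary group $e^{-itH_c}$. Write $u=\sum_k c^k\mathbf 1_{[t_{k-1},t_k)}$ with $0=t_0<\dots<t_N=T$, and define
\[\psi(t)=e^{-i(t-t_{k-1})H_{c^k}}\,e^{-i(t_{k-1}-t_{k-2})H_{c^{k-1}}}\cdots e^{-it_1H_{c^1}}\psi_0\quad\text{for } t\in[t_{k-1},t_k].\]
This $\psi$ is continuous into $L^2$ and is unitary in $\psi_0$ for each $t$. Mild solutions are taken in the Duhamel sense relative to $e^{it\Delta_g}$, with the bounded perturbation $P(t)=V+\sum_j u_j(t)W_j$:
\[\psi(t)=e^{it\Delta_g}\psi_0-i\int_0^te^{i(t-s)\Delta_g}P(s)\psi(s)\,ds.\]
On each interval, $e^{-itH_c}$ satisfies this identity; one checks it first for $H^2$ data and then extends by density. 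Uniqueness follows from Gronwall: the difference $w$ of two mild solutions satisfies $\|w(t)\|\le \|P\|_{L^\infty_tL^\infty_x}\int_0^t\|w\|\,ds$.

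\textbf{Continuous dependence.} This is the only step with some content, because it is measured in $L^1$ of the controls. Compare two solutions $\psi,\tilde\psi$ with data $(\psi_0,u)$ and $(\tilde\psi_0,\tilde u)$ through the Duhamel formula. Since $e^{it\Delta_g}$ is unitary,
\[\|\psi(t)-\tilde\psi(t)\|\le\|\psi_0-\tilde\psi_0\|+\int_0^t\|P(s)\|_\infty\|\psi(s)-\tilde\psi(s)\|\,ds+\sum_j\|W_j\|_\infty\int_0^t|u_j-\tilde u_j|\,\|\tilde\psi(s)\|\,ds.\]
Using $\|\tilde\psi(s)\|=\|\tilde\psi_0\|$, Gronwall gives
\[\sup_t\|\psi-\tilde\psi\|\le\Big(\|\psi_0-\tilde\psi_0\|+C\|\tilde\psi_0\|\,\|u-\tilde u\|_{L^1}\Big)\exp\!\Big(\|V\|_\infty T+\textstyle\sum_j\|W_j\|_\infty\|u\|_{L^1}\Big).\]
The exponent depends only on $\|u\|_{L^1}$, so the bound is uniform on $L^1$-neighbourhoods of $u$. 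This yields continuity of $(\psi_0,u)\mapsto\psi$ into $C([0,T];L^2)$.

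The main care is keeping the Gronwall bound in terms of $\|u\|_{L^1}$ rather than $\|u\|_{L^\infty}$. The $L^\infty$ potential $V$ causes no difficulty, since it is a bounded operator.
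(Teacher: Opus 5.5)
Your proof is correct and takes the same route as the paper. The paper simply invokes Stone's theorem, citing Kato, for the piecewise-constant Hamiltonians $-\Delta_g+V+\sum_j c_jW_j$, which are bounded self-adjoint perturbations of $-\Delta_g$. Your concatenation of unitary groups makes that argument explicit, and your Duhamel--Gronwall estimate, whose exponent depends only on $\|u\|_{L^1}$ of the base control, correctly supplies the $L^2\times L^1$ continuity that the paper leaves to the reference.
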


\subsection{Phase saturation}
\label{subsec:phase-saturation}
We use the quadratic operation $B$ defined in \eqref{eq:Gamma-intro}.
For $\psi_0\in\S_M$, write
$\psi_1\in\overline{\operatorname{Reach}}_{\mathrm{st}}(\psi_0)$ if,
for every $\varepsilon,\tau>0$, there exist $T\in[0,\tau]$ and
$u\in\PWC([0,T];\R^m)$ such that
\[\|\psi(T;\psi_0,u)-\psi_1\|_{L^2}<\varepsilon.\]
Let $\mathcal W:=\operatorname{span}_{\R}\{W_1,\ldots,W_m\}$ and set
$\mathcal H_1:=\mathcal W$. Define recursively
\[\mathcal H_{n+1}:=\mathcal H_n+
\left\{-\alpha B(f):\alpha\geq0,\quad f,-f\in\mathcal H_n\right\},
\qquad n\geq1, \qquad \mathcal H_\infty(\mathcal W):=\bigcup_{n\geq1}\mathcal H_n.\]
We also write $\mathcal H_\infty$ when $\mathcal W$ is understood.

\begin{theorem}[Phase saturation, {\cite[Theorem~A.3]{BLS25}\footnote{Theorem~A.3 of the revised arXiv version}, \cite[Theorem~6]{BP25}}]
\label{thm:CP-phase}
For every $\psi_0\in\S_M$,
\[\left\{e^{i\varphi}\psi_0:\varphi\in\mathcal H_\infty\right\}
\subset\overline{\operatorname{Reach}}_{\mathrm{st}}(\psi_0).\]
Consequently, if $\mathcal H_\infty$ is dense in $L^2(M,\R)$, then \eqref{eq:Sch} is small-time approximately controllable in phase.
\end{theorem}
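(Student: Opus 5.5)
The inclusion is proved by induction on $n$, showing that $e^{i\varphi}\psi_0\in\overline{\operatorname{Reach}}_{\mathrm{st}}(\psi_0)$ for every $\varphi\in\mathcal H_n$ and every $\psi_0\in\S_M$. Two properties of the small-time reachable set will be used throughout. First, transitivity: if $\psi_1\in\overline{\operatorname{Reach}}_{\mathrm{st}}(\psi_0)$ and $\psi_2\in\overline{\operatorname{Reach}}_{\mathrm{st}}(\psi_1)$, then $\psi_2\in\overline{\operatorname{Reach}}_{\mathrm{st}}(\psi_0)$. This follows from concatenating controls and the unitarity in \Cref{prop:well-posedness}, since unitarity makes errors add and times add. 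Second, closedness: $\overline{\operatorname{Reach}}_{\mathrm{st}}(\psi_0)$ is closed in $L^2$. By transitivity it then suffices to realize approximately a single step $e^{i\varphi}$, starting from an arbitrary state.

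\emph{Base case $n=1$.} Take $\varphi=\sum_j c_jW_j\in\mathcal W$ and apply the constant control $u=-c/\delta$ on $[0,\delta]$. The propagator is $e^{-i\delta(-\Delta_g+V)+i\sum_j c_jW_j}$. By the Trotter--Kato product formula, or a direct Duhamel estimate using that $e^{i\varphi}$ maps $H^2$ into $H^2$ and that $V$ is bounded, this converges strongly to $e^{i\varphi}$ as $\delta\to0$. Density of $H^2$ in $L^2$ together with unitarity removes the regularity assumption on $\psi_0$.

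\emph{Inductive step.} Assume $\pm f\in\mathcal H_n$ and $\mathcal H_n$ is achievable. By the inductive hypothesis and transitivity, all phases in $\mathcal H_n$, and their sums, are approximately reachable in arbitrarily small time. The key is the conjugation identity
\[
e^{-i\tau^{-1/2}f}\,e^{-i\tau(-\Delta_g+V)}\,e^{i\tau^{-1/2}f}
=\exp\Bigl(-i\tau\bigl(-(\nabla-i\tau^{-1/2}\nabla f)^2+V\bigr)\Bigr).
\]
Expanding the right-hand side, the operator equals $-\Delta_g+V+\tau^{-1}|\nabla f|^2+\tau^{-1/2}(\text{first-order term})$. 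Hence, as $\tau\to0$, the conjugated free evolution converges strongly to $e^{-iB(f)}$. Rescaling $f\mapsto\sqrt\alpha f$ gives $e^{-i\alpha B(f)}$.

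This is the main obstacle: a rigorous strong-convergence proof in $L^2$. The plan is to write the exact evolution for the conjugated generator and compare it via Duhamel with the multiplication group $e^{-itB(f)}$, applied to smooth $\psi_0$. The first-order term $\tau^{-1/2}(2i\langle\nabla f,\nabla\cdot\rangle+i\Delta f)$ acts on a time interval of length $\tau$, so it contributes $O(\tau^{1/2})$ on $H^1$ data, uniformly in the relevant norms. The $\tau\Delta_g$ and $\tau V$ terms contribute $O(\tau)$. Density of smooth states and unitarity then extend the convergence to all of $L^2$.

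Both phases $e^{\pm i\tau^{-1/2}f}$ are needed here, which is exactly why the recursion requires both $f$ and $-f$ to lie in $\mathcal H_n$. The middle factor is realized with $u=0$, and it too takes arbitrarily small time. Combining the resulting $-\alpha B(f)$ with any $\varphi\in\mathcal H_n$ by transitivity yields all of $\mathcal H_{n+1}$.

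\emph{Consequence.} Given a real $\varphi\in L^2$ and $\varepsilon>0$, choose $\tilde\varphi\in\mathcal H_\infty$ with $\|\tilde\varphi-\varphi\|_{L^2}$ small. Since $|e^{ia}-e^{ib}|\le\min(2,|a-b|)$, dominated convergence applied along a subsequence converging almost everywhere gives $\|e^{i\tilde\varphi}\psi_0-e^{i\varphi}\psi_0\|_{L^2}<\varepsilon/2$. The inclusion just proved then gives a control with time at most $\varepsilon$ and $\theta=0$.
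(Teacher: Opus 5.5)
Your proposal is correct and follows the argument behind the cited results; the paper gives no proof of its own and relies on \cite[Theorem~A.3]{BLS25} and \cite[Theorem~6]{BP25}, which go back to \cite[Theorem~2]{CP23}, and those proofs use the same two steps: large controls over short times for $\mathcal H_1$, and conjugation of the free flow by the large phases $e^{\pm i\tau^{-1/2}f}$, which converges strongly to $e^{-iB(f)}$, for the induction step. There are two harmless slips: conjugating by $e^{i\tau^{-1/2}f}$ actually gives $-(\nabla+i\tau^{-1/2}\nabla f)^2$, which changes only the sign of the first-order term and not the limit, and you use without saying so that each $\mathcal H_n$ is contained in $C^\infty(M,\R)$ and is stable under nonnegative scaling, which is needed for $\pm\tau^{-1/2}f,\pm\sqrt{\alpha}f\in\mathcal H_n$.
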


\subsection{Lie groups}
\label{sec:Lie_groups}
Throughout, $G$ is a compact connected Lie group. Let $1$ be its identity, let $\mathfrak g:=T_1G$ be its Lie algebra, and let $\mu$ be normalized Haar measure. On a compact group this measure is invariant under both left and right translations; see \cite[Corollary~2.28]{Folland16}. Every compact Lie group admits a bi-invariant Riemannian metric. One obtains such a metric by averaging an inner product on $\mathfrak g$ under the adjoint action and then transporting it by left translations; see \cite[Proposition~2.24]{AB15}. We fix a bi-invariant metric $\overline m$. Its normalized Riemannian volume measure coincides with $\mu$. We write $L^2(G):=L^2(G,\mu;\C)$ and denote by $\Delta_G:=\Delta_{\overline m}$ the associated Laplace--Beltrami operator. \smallskip

\noindent For $SO(3)$, one has $T_1G=\mathfrak{so}(3)$, where $\mathfrak{so}(3)=\{X\in M_3(\mathbb R):X^\top=-X\}$. The standard inner product $-\frac12\tr(XY)$ on $\mathfrak{so}(3)$ is $\Ad(SO(3))$-invariant and defines a bi-invariant metric on $SO(3)$. Since $\mathfrak{so}(3)$ is simple, this metric is unique up to rescaling by \cite[Proposition~2.48]{AB15}. Therefore, for some $c>0$, the bi-invariant metric is
\[\overline m_R(U,V)=-\frac{c}{2}\tr\big((R^{-1}U)(R^{-1}V)\big),\qquad R\in SO(3),\quad U,V\in T_RSO(3).\]

\subsection{Peter--Weyl theory}

We denote by $\widehat G$ the set of equivalence classes of irreducible unitary representations of $G$. These representations are finite-dimensional. For $\pi:G\to U(V_\pi)$, choose an orthonormal basis $e_1,\ldots,e_{d_\pi}$ of $V_\pi$, where $d_\pi:=\dim_{\C}V_\pi$. For $g\in G$, let $A(g)$ be the matrix of the operator $\pi(g)$ in this basis, so that
\[\pi(g)e_j=\sum_{i=1}^{d_\pi}A_{ij}(g)e_i.\]
Its coefficient space is the space of functions generated by these matrix entries:
\[\mathcal E_\pi:=\operatorname{span}_{\C}\{g\mapsto A_{ij}(g):1\leq i,j\leq d_\pi\}.\]
One may also write the coefficients as $g\mapsto\langle\pi(g)v,w\rangle_{V_\pi}$. We use the following form of the Peter--Weyl theorem; see \cite[Chapter~5]{Folland16}.

\begin{theorem}[Peter--Weyl, {\cite[Theorem~5.12]{Folland16}}]
\label{thm:peter-weyl}
The coefficient spaces are mutually orthogonal in $L^2(G)$, and
\[L^2(G)=\widehat{\bigoplus}_{\pi\in\widehat G}\mathcal E_\pi.\]
In particular, finite linear combinations of matrix coefficients are dense in $L^2(G)$.
\end{theorem}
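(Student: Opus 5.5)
The statement is the classical Peter--Weyl theorem. I would prove it in two independent parts: orthogonality of the coefficient spaces, and density of their algebraic sum.

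\textbf{Orthogonality.} This follows from Schur's lemma. Let $\pi,\sigma$ be irreducible unitary representations and let $L:V_\sigma\to V_\pi$ be any linear map. The averaged operator
\[\widetilde L:=\int_G\pi(g)L\sigma(g)^{-1}\,d\mu(g)\]
intertwines $\sigma$ and $\pi$; this uses left invariance of $\mu$. If $\pi\not\simeq\sigma$, Schur's lemma gives $\widetilde L=0$. Taking $L=w\langle\cdot,w'\rangle$ shows
\[\int_G\langle\pi(g)v,w\rangle\overline{\langle\sigma(g)v',w'\rangle}\,d\mu=0,\]
so $\mathcal E_\pi\perp\mathcal E_\sigma$. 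I would also note that $\mathcal E_\pi$ depends only on the class of $\pi$, since a unitary equivalence only changes the basis. Hence the direct sum over $\widehat G$ is well defined.

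\textbf{Density.} Let $\mathcal E:=\sum_\pi\mathcal E_\pi$. The argument uses convolution operators and proceeds in three steps.

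First, since $C(G)$ is dense in $L^2(G)$, it suffices to approximate a continuous $f$. Choose a continuous approximate identity $\phi_U\ge0$ supported in a small neighbourhood $U$ of $1$, with $\int\phi_U\,d\mu=1$ and $\phi_U(g^{-1})=\phi_U(g)$. By uniform continuity of $f$ on the compact group, $\phi_U*f\to f$ uniformly, hence in $L^2$.

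Second, the operator $T\psi:=\phi_U*\psi$ has a continuous kernel on $G\times G$. It is therefore Hilbert--Schmidt, hence compact. It is self-adjoint by the symmetry of $\phi_U$, and it commutes with right translations $R(h)\psi(g)=\psi(gh)$. By the spectral theorem for compact self-adjoint operators, $\phi_U*f=Tf$ is the $L^2$-limit of finite sums of eigenvectors $P_\lambda f$ with $\lambda\neq0$. The kernel component of $f$ is annihilated by $T$.

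Third, each eigenspace $E_\lambda$ with $\lambda\neq0$ is finite-dimensional and invariant under $R$. Its elements are continuous, since they equal $\lambda^{-1}T\psi$.

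\textbf{Key step.} The step needing an actual idea is that every function in a finite-dimensional right-invariant subspace $E\subset C(G)$ lies in $\mathcal E$. Evaluation at $1$ is a linear functional on $E$. By the Riesz representation theorem it is $\psi\mapsto\langle\psi,\delta\rangle_E$ for some $\delta\in E$, with respect to the $L^2$ inner product, for which $R|_E$ is unitary. Then
\[\psi(g)=(R(g)\psi)(1)=\langle R(g)\psi,\delta\rangle,\]
so $\psi$ is a matrix coefficient of the finite-dimensional unitary representation $R|_E$. Decomposing $E$ orthogonally into irreducible subrepresentations, which is possible by unitarity, writes $\psi$ as a finite sum of coefficients of irreducibles. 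Hence $\psi\in\mathcal E$.

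\textbf{Conclusion.} Combining the steps, every continuous $f$ is an $L^2$-limit of elements of $\mathcal E$. Thus $\mathcal E$ is dense, and together with orthogonality this gives the Hilbert direct sum decomposition.

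I expect the main obstacle to be the compactness/spectral step, namely making sure $T$ is compact and self-adjoint so the eigenspace decomposition applies. Once that is in place, the Riesz representation argument in the key step is short.
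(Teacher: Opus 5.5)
Your proof is correct, but it takes a different route from the one the paper relies on. The paper proves nothing here and simply cites \cite[Theorem~5.12]{Folland16}.

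In that source, orthogonality is the Schur orthogonality relation, proved by the same averaging-plus-Schur argument you give. Density, however, comes from the Stone--Weierstrass theorem. The span $\mathcal E$ of all coefficients is a unital algebra (products are coefficients of tensor products) and is closed under conjugation (coefficients of the conjugate representation). It separates points by the Gelfand--Raikov theorem, which Folland proves for general locally compact groups via functions of positive type and Krein--Milman.

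You instead use the original Peter--Weyl mechanism: a symmetric approximate identity, the compact self-adjoint operator $\psi\mapsto\phi_U*\psi$ commuting with right translations, and the fact that every element of a finite-dimensional right-invariant subspace of $C(G)$ is a coefficient of that subspace. This is what each route buys:
\begin{itemize}
\item Your argument is self-contained: it uses only Haar measure, the spectral theorem for compact operators and finite-dimensional linear algebra. It never needs the algebra structure of $\mathcal E$, and in effect it proves the compact case of Gelfand--Raikov along the way.
\item Your key step is also the nontrivial half of the identification of representative functions with $G$-finite functions, which the paper uses later for $\mathcal R(M)$.
\item The cited route gives uniform density in $C(G)$ immediately. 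Yours gives it with one more line: since $\|\phi_U*h\|_\infty\le\|\phi_U\|_{L^2}\|h\|_{L^2}$, applying $T$ turns the $L^2$ convergence of the eigen-expansion of $f-P_{\ker T}f$ into uniform convergence.
\end{itemize}

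Two small details are worth writing out. First, the representation $g\mapsto R(g)|_E$ must be continuous for its irreducible summands to define classes in $\widehat G$. This follows from uniform continuity of the continuous elements of $E$. Second, with $L=w\langle\cdot,w'\rangle$, the identity $\langle\widetilde Lv',v\rangle=0$ reads $\int_G\langle\pi(g)w,v\rangle\overline{\langle\sigma(g)w',v'\rangle}\,d\mu=0$. This is your displayed identity after renaming the vectors.
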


The representative algebra is the algebraic direct sum
\[\mathcal R(G):=\bigoplus_{\pi\in\widehat G}^{\mathrm{alg}}\mathcal E_\pi.\]
Products of matrix coefficients are coefficients of tensor products, so $\mathcal R(G)$ is an algebra. It is stable under complex conjugation, and its real part $\mathcal R(G,\R):=\mathcal R(G)\cap C^\infty(G,\R)$ is dense in $L^2(G,\R)$. Left and right translations are isometries of the bi-invariant metric, and the action
\[((a,b)\cdot f)(g):=f(a^{-1}gb),\qquad a,b,g\in G,\]
preserves each $\mathcal E_\pi$. For irreducible $\pi$, the coefficient map $\xi\otimes v\mapsto[g\mapsto\xi(\pi(g)v)]$ is an isomorphism from $V_\pi^*\otimes V_\pi$ onto $\mathcal E_\pi$ by Schur orthogonality \cite[Theorem~5.8(b)]{Folland16}. It identifies this action with the outer tensor product $\pi^*\boxtimes\pi$. Since $\pi^*$ and $\pi$ are irreducible, $\mathcal E_\pi$ is irreducible under $G\times G$ by \cite[Theorem~7.12]{Folland16}. Thus the translates of any nonzero coefficient span $\mathcal E_\pi$. A \emph{continuous character} of a torus $T$ is a continuous group homomorphism $\chi:T\to\S^1$, equivalently a one-dimensional unitary representation of $T$.\smallskip

\noindent For $SO(3)$, the standard representation is $\pi_1:SO(3)\to U(\C^3)$, $\pi_1(R)v=Rv$. It is irreducible, and its matrix coefficients in the standard basis are $\langle\pi_1(R)e_j,e_i\rangle=R_{ij}$. Hence $\mathcal E_{\pi_1}=\operatorname{span}_{\C} \{R\mapsto R_{ij}:1\leq i,j\leq3\}$. The trivial representation $\pi_0$ has coefficient space $\mathcal E_{\pi_0}=\C1$. More generally, the irreducible representations are indexed by $\ell\in\N\cup\{0\}$, with $\dim_\C V_{\pi_\ell}=2\ell+1$. On $\T^d=\R^d/(2\pi\Z)^d$, write $g=[x]$ with $x\in\R^d$. All irreducible representations are the characters $\chi_k(g)=e^{i\langle k,x\rangle}$, $k\in\Z^d$, and Peter--Weyl reduces to Fourier decomposition.

\subsection{Spectral and product properties of matrix coefficients}
\label{subsec:coefficient-properties}
Let $\Lambda_G^+$ be the additive monoid of highest weights parametrizing the irreducible unitary representations $\pi_\alpha$ of $G$; see \cite[Theorem~5.110]{Knapp02}. A finite set of generators is constructed explicitly in \Cref{app:explicit-generators}. Write $V_\alpha:=V_{\pi_\alpha}$ and $\mathcal E_\alpha:=\mathcal E_{\pi_\alpha}$, with $\mathcal E_0=\C1$, and let $\langle\cdot,\cdot\rangle_*$ be the inner product on weights induced by the bi-invariant metric. The Casimir formula \cite[Proposition~5.28(b)]{Knapp02}, with the metric normalization of \Cref{app:Casimir-details}, gives $c_\alpha\geq0$ such that
\begin{equation}\label{eq:coefficient-eigenvalue}-\Delta_Gf=c_\alpha f,\qquad f\in\mathcal E_\alpha.\end{equation}
\begin{equation}\label{eq:parameter-gap}\frac{c_\alpha+c_\beta-c_{\alpha+\beta}}2=-\langle\alpha,\beta\rangle_*,\qquad\alpha,\beta\in\Lambda_G^+.\end{equation}
The Cartan component of a tensor product \cite[p.~309]{Baur03} provides coefficients $q_\alpha\in\mathcal E_\alpha$, normalized by $q_\alpha(1)=1$, such that
\[q_\alpha q_\beta\in\mathcal E_{\alpha+\beta}\setminus\{0\},\qquad\alpha,\beta\in\Lambda_G^+.\]
The definitions and constructions are detailed in \Cref{app:representation-details}. Extending $\Gamma$ complex-bilinearly, the Laplacian product rule and \eqref{eq:parameter-gap} yield
\begin{equation}\label{eq:Cartan-interaction}\Gamma(q_\alpha,q_\beta)=-\langle\alpha,\beta\rangle_*q_\alpha q_\beta.\end{equation}

\begin{lemma}[Propagation of a Cartan component]
\label{lem:Cartan-propagation}
Let $\mathcal A\subset C^\infty(G,\C)$ be a complex vector space invariant under $G\times G$ and stable under $\Gamma$. If $\mathcal E_\alpha,\mathcal E_\beta\subset\mathcal A$, $ \langle\alpha,\beta\rangle_*\neq0$, then $\mathcal E_{\alpha+\beta}\subset\mathcal A$.
\end{lemma}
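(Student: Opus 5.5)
The plan is to produce one nonzero element of $\mathcal E_{\alpha+\beta}$ inside $\mathcal A$ and then use irreducibility of $\mathcal E_{\alpha+\beta}$ under $G\times G$. Since $\mathcal E_\alpha,\mathcal E_\beta\subset\mathcal A$, the Cartan coefficients $q_\alpha\in\mathcal E_\alpha$ and $q_\beta\in\mathcal E_\beta$ both lie in $\mathcal A$. Because $\mathcal A$ is stable under $\Gamma$, extended complex-bilinearly, we get $\Gamma(q_\alpha,q_\beta)\in\mathcal A$. By \eqref{eq:Cartan-interaction},
\[\Gamma(q_\alpha,q_\beta)=-\langle\alpha,\beta\rangle_*\,q_\alpha q_\beta .\]
The hypothesis $\langle\alpha,\beta\rangle_*\neq0$ allows division, so $q_\alpha q_\beta\in\mathcal A$. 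By the Cartan component property, $q_\alpha q_\beta$ is a nonzero element of $\mathcal E_{\alpha+\beta}$.

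Next I would use the $G\times G$-invariance of $\mathcal A$. All translates $(a,b)\cdot(q_\alpha q_\beta)$ lie in $\mathcal A$, and therefore so does their complex linear span. As recalled after \Cref{thm:peter-weyl}, $\mathcal E_{\alpha+\beta}$ is irreducible under $G\times G$, being isomorphic to $\pi_{\alpha+\beta}^*\boxtimes\pi_{\alpha+\beta}$. Hence the span of the translates of any nonzero element is a nonzero invariant subspace. It is automatically closed because $\mathcal E_{\alpha+\beta}$ is finite-dimensional, so it equals $\mathcal E_{\alpha+\beta}$. This gives $\mathcal E_{\alpha+\beta}\subset\mathcal A$.

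The only delicate point is the meaning of ``stable under $\Gamma$'' for a complex space: it should mean $\Gamma(\mathcal A,\mathcal A)\subset\mathcal A$ for the complex-bilinear extension of $\Gamma$. In \Cref{app:representation-details} one may also need to check that \eqref{eq:Cartan-interaction} holds with this extension. That identity follows from
\[2\Gamma(f,h)=\Delta_G(fh)-f\Delta_Gh-h\Delta_Gf,\]
which extends bilinearly, together with \eqref{eq:coefficient-eigenvalue} applied to $q_\alpha$, $q_\beta$ and $q_\alpha q_\beta\in\mathcal E_{\alpha+\beta}$, and \eqref{eq:parameter-gap}. Since the statement takes these identities as given, I expect no real obstacle beyond this bookkeeping.
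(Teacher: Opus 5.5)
Your proof is correct and is the same argument as the paper's. The interaction identity with $\langle\alpha,\beta\rangle_*\neq0$ places the nonzero Cartan product $q_\alpha q_\beta$ in $\mathcal A\cap\mathcal E_{\alpha+\beta}$, and irreducibility of $\mathcal E_{\alpha+\beta}$ under $G\times G$ then makes its translates span the whole space. Your remarks on the complex-bilinear extension of $\Gamma$ and on finite-dimensionality are bookkeeping that the paper leaves implicit.
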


\begin{proof}
By \eqref{eq:Cartan-interaction}, the nonzero function $q_\alpha q_\beta$ belongs to $\mathcal A\cap\mathcal E_{\alpha+\beta}$. Its $G\times G$-translates span $\mathcal E_{\alpha+\beta}$, so this entire space belongs to $\mathcal A$.
\end{proof}

\subsection{Normal homogeneous spaces}
\label{subsec:homogeneous-preliminaries}
Let $G$ be a compact connected Lie group endowed with a bi-invariant metric $\overline m$, and let $K\subset G$ be a closed subgroup. The quotient $M:=G/K$ is a compact connected smooth manifold, on which $G$ acts transitively by $a\cdot(gK):=agK$. Write $p:G\to M$ for the quotient projection. Let $\mathfrak k$ be the Lie algebra of $K$, and take its orthogonal complement with respect to $\overline m_1$. Write $m$ for the normal homogeneous metric on $M$, determined by
\[m_{1K}(dp_1X,dp_1Y)=\overline m_1(X,Y),\qquad X,Y\in\mathfrak k^\perp,\]
and $G$-invariance. Thus $p$ is a Riemannian submersion. The normalized Riemannian measure on $M$ is $\mu_M=p_*\mu_G$, where $\mu_G$ is normalized Haar measure. Pullback identifies functions on $M$ with right $K$-invariant functions on $G$:
\[(p^*f)(g)=f(gK),\qquad(p^*f)(gk)=(p^*f)(g),\qquad g\in G,\quad k\in K.\]
The fibers of $p$ are totally geodesic. Consequently, for $f,h\in C^\infty(M,\C)$,
\begin{equation}
\label{eq:homogeneous-pullback}
\Delta_G(p^*f)=p^*(\Delta_mf),
\qquad
\Gamma_G(p^*f,p^*h)=p^*(\Gamma_m(f,h)).
\end{equation}
Define the representative algebra of $M$ by
\[\mathcal R(M):=
\{f\in C^\infty(M,\C):p^*f\in\mathcal R(G)\},\]
and set $\mathcal R(M,\R):=\mathcal R(M)\cap C^\infty(M,\R)$. Equivalently, $\mathcal R(M)$ consists of the $G$-finite functions: those whose translates span a finite-dimensional vector space. Consider the left action $(\mathcal L_a f)(gK):=f(a^{-1}gK)$ and $\mathcal L_{ab}=\mathcal L_{a}\mathcal L_{b}$. Following \cite{van2005lie}, we would like to define functions that satisfy properties similar to those of matrix coefficients and are, in particular, well defined on the quotient. For $\alpha\in\Lambda_G^+$, define $(V_\alpha^*)^K$ as the space of linear forms on $V_\alpha$ fixed by $K$:
\[(V_\alpha^*)^K=\{\xi\in V_\alpha^*: \xi(\pi_\alpha(k)v)=\xi(v),\ \forall k\in K,\ \forall v\in V_\alpha\}.\]
For a fixed $\xi\neq 0$, each $v\in V_\alpha$ defines a function on $M$ by
\[f_{v,\xi}:M\to\C,\qquad f_{v,\xi}(gK):=\xi(\pi_\alpha(g^{-1})v).\]
The space $W_\xi:=\{f_{v,\xi}:v\in V_\alpha\}$ is a copy of $\pi_\alpha$: that is, the action of $G$ on $W_\xi$ is equivalent to $\pi_\alpha$. We define the isotypic component of highest weight $\alpha$\footnote{In \cite[p.~137]{Folland16}, this space is denoted by $M_{\pi_\alpha}$ and by $L^2(X)_\delta$ in \cite[p. 10]{van2005lie}} by
\[\mathcal R_\alpha(M)=\operatorname{span}_{\C}\{f_{v,\xi}:v\in V_\alpha,\ \xi\in(V_\alpha^*)^K\}, \qquad \mathcal R_\alpha(M)\simeq V_\alpha\otimes(V_\alpha^*)^K.\]
Here the tensor factor $(V_\alpha^*)^K$ records the multiplicity and Peter--Weyl theory gives
\[L^2(M)=\widehat{\bigoplus}_{\alpha\in\Lambda_G^+}\mathcal R_\alpha(M), \qquad \mathcal R(M)=\bigoplus_{\alpha\in\Lambda_G^+}^{\mathrm{alg}} \mathcal R_\alpha(M).\]
These statements follow from the Peter--Weyl decomposition on $G$ by taking right $K$-invariants; see \cite[Chapter~5]{Folland16}. In particular, $\mathcal R(M)$ is dense in $L^2(M)$. An irreducible representation and its dual have the same Casimir scalar. Thus, with the above convention for the left action, \eqref{eq:homogeneous-pullback} gives $-\Delta_m f=c_\alpha f$, $f\in\mathcal R_\alpha(M)$, where $c_\alpha$ is the scalar in \eqref{eq:coefficient-eigenvalue}. For each $\alpha$, let $\mathcal R_\alpha^+(M)$ be the space of highest-weight vectors in $\mathcal R_\alpha(M)$. In each irreducible copy of $V_\alpha$, these vectors form a distinguished one-dimensional subspace; their $G$-translates span that copy. The precise definition is given in \Cref{app:homogeneous-weights}. For the saturation proof, we use the algebra
\[\mathcal R^+(M):=\bigoplus_{\alpha\in\Lambda_G^+}^{\mathrm{alg}}\mathcal R_\alpha^+(M),\]
and the fact that multiplication adds its labels:
\[f\in\mathcal R_\alpha^+(M),\quad h\in\mathcal R_\beta^+(M)\quad\Longrightarrow\quad fh\in\mathcal R_{\alpha+\beta}^+(M).\]
Using the Laplacian product rule and \eqref{eq:parameter-gap}, we obtain
\begin{equation}
\label{eq:homogeneous-Cartan}
\Gamma_m(f,h)=-\langle\alpha,\beta\rangle_*fh, \qquad f\in\mathcal R_\alpha^+(M),\quad h\in\mathcal R_\beta^+(M).
\end{equation}
The following finiteness property follows from Hilbert's theorem and the Lie-algebra form of the Hochschild--Mostow theorem \cite[pp.~646--647]{HM73}. The proof is given in \Cref{app:homogeneous-weights}.

\begin{proposition}[Finite generation of highest-weight functions]
\label{prop:highest-weight-generators}
There exist finitely many nonconstant functions $q_1,\ldots,q_s$ and nonzero weights $\alpha_1,\ldots,\alpha_s\in\Lambda_G^+$ such that
\[q_j\in\mathcal R_{\alpha_j}^+(M), \qquad \mathcal R^+(M)=\C[q_1,\ldots,q_s].\]
\end{proposition}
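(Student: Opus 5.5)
The plan is to identify $\mathcal R^+(M)$ with an algebra of invariants and apply Hilbert's finiteness theorem. Fix a Borel subalgebra $\mathfrak b=\mathfrak h\oplus\mathfrak n^+$ of $\mathfrak g_\C$ with unipotent radical $N^+$, and let $G_\C$ be the complexification of $G$. By definition (\Cref{app:homogeneous-weights}), a function $f\in\mathcal R_\alpha(M)$ is a highest-weight vector exactly when it is annihilated by the infinitesimal left action of $\mathfrak n^+$ and has $\mathfrak h$-weight $\alpha$. Hence
\[\mathcal R^+(M)=\mathcal R(M)^{\mathfrak n^+}.\]
Since $\mathfrak n^+$ acts by derivations of the algebra $\mathcal R(M)$, the Leibniz rule shows that this invariant space is a subalgebra. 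The weights add under multiplication, as already recorded before the statement.

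The first step is to realize $\mathcal R(M)$ as a finitely generated commutative $\C$-algebra carrying a rational $G_\C$-action. Choose finitely many irreducible representations $\pi_{\beta_1},\dots,\pi_{\beta_r}$ that together give a faithful representation of $G$. By Peter--Weyl and Tannaka duality, $\mathcal R(G)$ is generated as an algebra by the coefficients of these representations and their complex conjugates, so $\mathcal R(G)\cong\C[G_\C]$ is finitely generated. Taking right $K$-invariants gives $\mathcal R(M)=\mathcal R(G)^{K}=\C[G_\C]^{K_\C}$, where $K_\C$ is the complexification of $K$. Since $K_\C$ is reductive, Hilbert's theorem shows that $\mathcal R(M)$ is a finitely generated algebra. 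The left $G$-action on it extends to a locally finite rational action of $G_\C$ with Lie algebra action $\mathfrak g_\C$.

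The second step is the finiteness of $\mathfrak n^+$-invariants, and this is where the Hochschild--Mostow theorem \cite[pp.~646--647]{HM73} enters. For a finitely generated commutative algebra $A$ with a locally finite rational action of a reductive algebraic group, the algebra of invariants under a maximal unipotent subgroup is again finitely generated. In its Lie-algebra form, this says that $A^{\mathfrak n^+}$ is finitely generated. The standard argument is the transfer principle
\[A^{N^+}\cong\bigl(A\otimes\C[G_\C/N^+]\bigr)^{G_\C}.\]
Here $\C[G_\C/N^+]$ is finitely generated because it is the algebra spanned by highest-weight vectors of the regular representation. Hilbert's theorem then applies to the reductive group $G_\C$. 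Connectedness of $N^+$ gives $A^{N^+}=A^{\mathfrak n^+}$. Checking that the Hochschild--Mostow hypotheses hold in our setting is the main obstacle I expect: one must verify local finiteness and rationality of the action on $\mathcal R(M)$, and identify the $N^+$-fixed vectors with $\mathcal R^+(M)$ via the dual convention for the left action.

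Finally, choose finitely many generators of $\mathcal R^+(M)$. The algebra is graded by $\Lambda_G^+$, since it is the algebraic direct sum of the spaces $\mathcal R_\alpha^+(M)$, so each generator can be replaced by its homogeneous components. This yields homogeneous generators $q_j\in\mathcal R^+_{\alpha_j}(M)$. The component $\mathcal R_0^+(M)=\mathcal R_0(M)$ consists of the $G$-invariant functions on the transitive space $M$, hence equals $\C1$. Therefore any generator of weight $0$ is a constant and can be discarded. The remaining generators have nonzero weights $\alpha_j$. They are nonconstant, because a nonzero constant lies in $\mathcal R_0(M)$, which meets $\mathcal R_{\alpha_j}(M)$ only in $0$. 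Thus $\mathcal R^+(M)=\C[q_1,\dots,q_s]$.
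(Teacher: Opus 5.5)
Your proposal is correct and follows the paper's own route: identify $\mathcal R(M)$ with $\C[G_\C]^{K_\C}$, get finite generation from Hilbert's theorem, get finite generation of the $\mathfrak n^+$-invariants from Hochschild--Mostow, then split generators into weight components and discard the weight-zero ones, which are constant by transitivity. The differences are cosmetic: you use the group form with a maximal unipotent $N^+$ and sketch the transfer principle, where the paper cites the Lie-algebra form for $[\mathfrak g_\C,\mathfrak g_\C]$; and the equality $\C[G_\C]^{K}=\C[G_\C]^{K_\C}$ needs the one-line Zariski-density argument the paper gives, with $K_\C$ taken to be the Zariski closure of $K$ in $G_\C$.
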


\section{Finite saturation on compact Lie groups}
\label{sec:finite-saturation-groups}

The first point is to recover both signs of the quadratic terms generated by phase saturation. We give the argument under a transitive isometric action, so it applies both to $G$ and to $G/K$. For a finite-dimensional real space $\mathcal W\subset C^\infty(M,\R)$, define
\[\mathcal S_1(\mathcal W):=\mathcal W,\qquad
\mathcal S_{n+1}(\mathcal W):=\mathcal S_n(\mathcal W)+
\operatorname{span}_{\R}\{\Gamma(f,h):f,h\in\mathcal S_n(\mathcal W)\},\]
and set $\mathcal S_\infty(\mathcal W):=\bigcup_{n\geq1}\mathcal S_n(\mathcal W)$.

\begin{lemma}
\label{lem:recovery-signs}
Assume that a compact group acts transitively by isometries on $(M,g)$, and that $\mathcal W$ is invariant under this action and contains $1$. Then
\[\mathcal S_\infty(\mathcal W)
\subset\mathcal H_\infty(\mathcal W)
\cap\bigl(-\mathcal H_\infty(\mathcal W)\bigr).\]
\end{lemma}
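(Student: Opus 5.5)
The plan is an induction on $n$ showing that $\mathcal S_n(\mathcal W)\subset\mathcal L:=\mathcal H_\infty(\mathcal W)\cap(-\mathcal H_\infty(\mathcal W))$, using in addition that each $\mathcal S_n(\mathcal W)$ is finite-dimensional and invariant under the compact group, call it $H$. The induction rests on three structural facts about $\mathcal H_\infty$.

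\emph{Step 1: $\mathcal H_\infty$ is a convex cone.} It is closed under addition and under multiplication by nonnegative scalars. Since $\mathcal H_n\subset\mathcal H_{n+1}$, a finite sum $h-\sum_i\alpha_iB(f_i)$ with $h\in\mathcal H_n$ and $\pm f_i\in\mathcal H_n$ is reached after adding one term per level. An induction on the level then shows that $\mathcal H_n+\mathcal H_n\subset\mathcal H_{2n}$, say. Consequently $\mathcal L$ is a real vector space containing $\mathcal W$, hence containing the constants.

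\emph{Step 2: $\mathcal H_\infty$ is $H$-invariant.} Isometries commute with $B$, that is, $B(f\circ k)=B(f)\circ k$. Since $\mathcal W$ is invariant, every $\mathcal H_n$ is invariant by induction, and so is $\mathcal L$.

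\emph{Step 3: $B(f)\in\mathcal L$ for every $f\in\mathcal L$.} If $f\in\mathcal L$, then $\pm f\in\mathcal H_n$ for some $n$, so $-B(f)\in\mathcal H_\infty$; it remains to obtain $+B(f)$. Assume $f$ lies in a finite-dimensional $H$-invariant subspace $E\subset\mathcal L$; by induction $E=\mathcal S_n(\mathcal W)$ will do. Then $k\mapsto B(f\circ k)$ is a continuous map from $H$ into the finite-dimensional space $F:=\operatorname{span}\Gamma(E,E)$.

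\begin{itemize}
\item The Haar average $c:=\int_H B(f\circ k)\,dk$ is an $H$-invariant function. Since the action is transitive, $c$ is a constant, and $c\geq0$.
\item Since $F$ is finite-dimensional, $c$ lies in the closed convex hull of the compact set $\{B(f\circ k):k\in H\}\subset F$, which equals its convex hull. By Carath\'eodory, $c=\sum_{i=1}^N\lambda_iB(f\circ k_i)$ with $\lambda_i>0$ and $\sum_i\lambda_i=1$.
\item Solving for one term gives
\[B(f\circ k_1)=\lambda_1^{-1}\Bigl(c-\sum_{i\geq2}\lambda_iB(f\circ k_i)\Bigr).\]
Here $\pm c\in\mathcal W\subset\mathcal H_\infty$, and each $f\circ k_i$ lies in $\mathcal L$ by Step 2, so $-B(f\circ k_i)\in\mathcal H_\infty$. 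By Step 1 the right-hand side lies in $\mathcal H_\infty$.
\item Translating by $k_1^{-1}$ and using Step 2 gives $B(f)\in\mathcal H_\infty$.
\end{itemize}

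Together with $-B(f)\in\mathcal H_\infty$, this gives $B(f)\in\mathcal L$.

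\emph{The induction.} The base case holds because $\mathcal S_1=\mathcal W\subset\mathcal L$ and $\mathcal W$ is invariant and finite-dimensional. For the inductive step, polarization gives, for $f,h\in\mathcal S_n$,
\[\Gamma(f,h)=\tfrac14\bigl(B(f+h)-B(f-h)\bigr).\]
Since $f\pm h\in\mathcal S_n\subset\mathcal L$, Step 3 and the vector space structure of $\mathcal L$ give $\Gamma(f,h)\in\mathcal L$. Hence $\mathcal S_{n+1}\subset\mathcal L$. The space $\mathcal S_{n+1}$ is again finite-dimensional, being spanned by $\mathcal S_n$ and the image of a bilinear map on a finite-dimensional space. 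It is $H$-invariant because $\Gamma$ commutes with isometries.

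The main obstacle is Step 3: passing from the averaged identity, which a priori only gives a limit of finite sums, to an exact finite identity. The finite-dimensionality of $F$ together with Carath\'eodory's theorem is what makes this exact without any closure argument. If one prefers, the integral can instead be handled via compactness of the convex hull in $F$.
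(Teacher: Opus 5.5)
Your proof is correct. Its skeleton matches the paper's: the convex-cone property of $\mathcal H_\infty$, invariance of $\mathcal H_\infty\cap(-\mathcal H_\infty)$, polarization, and induction over the finite-dimensional invariant spaces $\mathcal S_n(\mathcal W)$. The genuine difference is in Step 3, where you produce the constant that flips the sign of $B(f)$.

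\textbf{How the paper does Step 3.} It does not average. It takes an $L^2$-orthonormal basis $e_1,\dots,e_N$ of the finite-dimensional invariant space $E$. The sum $\sum_j B(e_j)$ does not depend on the choice of orthonormal basis. Isometries preserve the Riemannian measure, so they act orthogonally on $E$; hence this sum is invariant, and by transitivity it is a constant $C_E$. Taking $e_1=f/\|f\|_{L^2}$ then gives $B(e_1)=C_E+\sum_{j\geq2}(-B(e_j))\in\mathcal H_\infty$.

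\textbf{How your route differs.} You replace this explicit finite identity by the Haar average $\int_H B(f\circ k)\,dk$. You then use finite-dimensionality of $\operatorname{span}\Gamma(E,E)$ and Carath\'eodory to extract an exact convex combination $\sum_i\lambda_iB(f\circ k_i)$. This is sound: your argument that the integral lies in the convex hull, which is compact in finite dimensions, is fine. What it buys is that you use only translates of $f$ itself and no invariant inner product on $E$.

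\textbf{What each approach costs.} Your route needs continuity of the action, a Haar probability measure and a convexity argument, and the points $k_i$ are non-explicit. The paper's identity needs none of this; its proof does not even use compactness of the group. It also makes the ``main obstacle'' you flag disappear, because the orthonormal-basis sum is already an exact finite identity. For irreducible $E$, Schur's lemma shows your Haar average equals $\frac{\|f\|_{L^2}^2}{\dim E}\sum_jB(e_j)$.

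\textbf{A small wording point.} Your Step 3 heading claims $B(f)\in\mathcal L$ for every $f\in\mathcal L$. You only prove this for $f$ lying in a finite-dimensional invariant subspace, which is all the induction needs.
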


\begin{proof}
Each $\mathcal H_n$ is stable under multiplication by nonnegative scalars. Moreover, induction gives
\[\mathcal H_n+\mathcal H_m\subset\mathcal H_{n+m-1},
\qquad n,m\geq1.\]
For $m=1$, this follows by induction on $n$ since $\mathcal H_1=\mathcal W$ is a vector space. For the induction step, write $k=k_0-\alpha B(f)\in\mathcal H_{m+1}$, with $k_0,f,-f\in\mathcal H_m$. If $h\in\mathcal H_n$, then $h+k_0\in\mathcal H_{n+m-1}$, and $f,-f$ belong to the same space. Thus $h+k\in\mathcal H_{n+m}$. Consequently, $\mathcal H_\infty$ is a convex cone, and
\[\mathcal V:=\mathcal H_\infty\cap(-\mathcal H_\infty)\]
is a real vector space. It is invariant under the action, since $B$ commutes with isometries. Let $E\subset\mathcal V$ be finite-dimensional and invariant, and let $e_1,\ldots,e_N$ be an orthonormal basis in $L^2(M,\R)$. The sum $\sum_j B(e_j)$ is independent of this basis and invariant under the action. By transitivity,
\begin{equation}
\label{eq:energy-balance}
\sum_{j=1}^N B(e_j)=C_E
\end{equation}
for a constant $C_E$. Since $e_j,-e_j\in\mathcal H_\infty$, we have $-B(e_j)\in\mathcal H_\infty$. Constants belong to $\mathcal V$, so
\[B(e_1)=C_E+\sum_{j=2}^N(-B(e_j))\in\mathcal H_\infty.\]
Choosing $e_1=f/\|f\|_{L^2}$ shows that $B(f)\in\mathcal V$ for every $f\in E$. Polarization gives $\Gamma(f,h)\in\mathcal V$ for $f,h\in E$. Each $\mathcal S_n(\mathcal W)$ is finite-dimensional and invariant, so induction proves the result.
\end{proof}

The following elementary lemma handles the pairs of weights with zero scalar product.

\begin{lemma}
\label{lem:finite-exceptional-set}
Let $w_1,\ldots,w_s$ be nonzero vectors in a finite-dimensional Euclidean space. For $n\in\N_0^s$, where $\N_0:=\N\cup\{0\}$, set $w(n):=\sum_j n_jw_j$. Then
\[\mathcal J:=\left\{
n\in\N_0^s\setminus\{0\}:
\langle w(n)-w_j,w_j\rangle=0\text{ whenever }n_j>0
\right\}\]
is finite. It contains every standard basis vector of $\N_0^s$.
\end{lemma}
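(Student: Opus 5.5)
We reformulate the defining condition of $\mathcal J$ as a finite family of linear equations, one family for each possible support, and then use a positivity identity to bound the solutions. The finitely many supports together with these bounds give finiteness.

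\textbf{Reformulation.} For $n\in\N_0^s\setminus\{0\}$, let $S(n):=\{j:n_j>0\}$. The condition in the definition of $\mathcal J$ says that
\[
\langle w(n),w_j\rangle=|w_j|^2\qquad\text{for all }j\in S(n).
\]
If $n=e_j$ is a standard basis vector, then $w(n)-w_j=0$, so the condition holds trivially. Hence every standard basis vector lies in $\mathcal J$. Since there are only $2^s$ possible supports, it suffices to fix a nonempty $S\subset\{1,\ldots,s\}$ and show that only finitely many $n\in\mathcal J$ have $S(n)=S$.

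\textbf{Step 1: $w(n)$ is constant for a fixed support.} Let $A_S=(\langle w_j,w_k\rangle)_{j,k\in S}$ be the Gram matrix and $b=(|w_j|^2)_{j\in S}$. For $n$ with support $S$, the condition reads $A_Sn=b$ as an equation in $\R^S$. Any two solutions $n,n'$ satisfy $m:=n-n'\in\ker A_S$. Then
\[
0=m^\top A_Sm=\Bigl|\sum_{k\in S}m_kw_k\Bigr|^2 ,
\]
so $w(n)=w(n')$. Thus all $n\in\mathcal J$ with support $S$ share a common vector $u_S:=w(n)$.

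\textbf{Step 2: the key identity.} For $n\in\mathcal J$, multiply the condition by $n_j$ and sum over $j\in S(n)$. This gives
\[
|w(n)|^2=\sum_j n_j\langle w(n),w_j\rangle=\sum_j n_j|w_j|^2 .
\]

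\textbf{Step 3: bounding the solutions.} Combining the two steps, every $n\in\mathcal J$ with support $S$ satisfies
\[
\sum_{j\in S}n_j|w_j|^2=|u_S|^2 .
\]
Since each $|w_j|^2>0$, this gives $n_j\le |u_S|^2/|w_j|^2$ for all $j$. So only finitely many such $n$ exist, and summing over the $2^s$ supports shows that $\mathcal J$ is finite.

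The main obstacle is Step 1. Without it, the identity in Step 2 alone does not bound $n$, because the right-hand side $|w(n)|^2$ grows with $n$. The Gram-kernel argument, which shows that $w(n)$ is determined by the support, is what makes the bound work.
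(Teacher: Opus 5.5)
Your proof is correct and follows essentially the same route as the paper. The paper also fixes the support and shows that $w(n)$ is uniquely determined there, since the difference of two solutions lies in the span of the $w_j$ and is orthogonal to it (your Gram-kernel computation is the same fact). It then bounds $n$ using $\|w(n)\|^2=\sum_j n_j\|w_j\|^2$ with positive coefficients.
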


\begin{proof}
Fix the support $I:=\{j:n_j>0\}$. For $n\in\mathcal J$, the vector $w(n)$ belongs to $E_I:=\operatorname{span}\{w_j:j\in I\}$ and satisfies $\langle w(n),w_j\rangle=\|w_j\|^2$ for $j\in I$. These equalities determine at most one vector $w_I\in E_I$, since the difference of two solutions belongs to $E_I$ and is orthogonal to $E_I$. For such a vector,
\[\|w_I\|^2=\sum_{j\in I}n_j\|w_j\|^2.\]
All coefficients are positive, so only finitely many integer tuples are possible. There are finitely many supports. The last point follows directly from the definition.
\end{proof}

Let $\mathcal D_G=\{\alpha_1,\ldots,\alpha_s\}$ be the explicit finite set of nonzero generators of $\Lambda_G^+$ constructed in \Cref{app:explicit-generators}. We apply \Cref{lem:finite-exceptional-set} with the scalar product $\langle\cdot,\cdot\rangle_*$, and denote the resulting set by $\mathcal J_G$. Write $\alpha(n):=\sum_j n_j\alpha_j$ and set $\mathcal F_G:=\{\alpha(n):n\in\mathcal J_G\}$. Define
\begin{equation}
\label{eq:control-space-G}
\mathcal W_G:=\operatorname{span}_{\R}
\left(
\{1\}\cup
\{\Re f,\Im f:f\in\mathcal E_\alpha,\ \alpha\in\mathcal F_G\}
\right).
\end{equation}
This space is finite-dimensional, invariant under $G\times G$, and contains the constants. Empty families give $\mathcal W_G=\R1$ when $G$ is trivial.

\begin{theorem}[Saturation on a compact Lie group]
\label{thm:finite-saturation-G}
Let $G$ be a compact connected Lie group endowed with a bi-invariant metric, and let $\mathcal W_G$ be defined by \eqref{eq:control-space-G}. Then
\[\mathcal R(G,\R)\subset\mathcal H_\infty(\mathcal W_G).\]
Consequently, $\mathcal H_\infty(\mathcal W_G)$ is dense in $L^2(G,\R)$.
\end{theorem}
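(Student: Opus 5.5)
By \Cref{lem:recovery-signs} (applied to $G\times G$ acting transitively by isometries, with $\mathcal W_G$ invariant and containing $1$), it suffices to show $\mathcal R(G,\R)\subset\mathcal S_\infty(\mathcal W_G)$. Let $\mathcal A$ be the complexification $\mathcal S_\infty(\mathcal W_G)+i\,\mathcal S_\infty(\mathcal W_G)$. Each $\mathcal S_n(\mathcal W_G)$ is $G\times G$-invariant, because $\Gamma$ commutes with isometries. $\mathcal S_\infty$ is stable under real $\Gamma$, so $\mathcal A$ is stable under the complex-bilinear extension of $\Gamma$ and invariant under $G\times G$. Moreover $\mathcal E_\alpha\subset\mathcal A$ for every $\alpha\in\mathcal F_G\cup\{0\}$, since real and imaginary parts of $\mathcal E_\alpha$ lie in $\mathcal W_G$. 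Since every $\alpha\in\Lambda_G^+$ is a combination $\alpha(n)$ with $n\in\N_0^s$ (the $\alpha_j$ generate the monoid), and since $\mathcal A$ is closed under conjugation, it suffices to prove $\mathcal E_{\alpha(n)}\subset\mathcal A$ for all $n\in\N_0^s$. Then $\mathcal R(G)\subset\mathcal A$, hence $\mathcal R(G,\R)\subset\mathcal S_\infty$, and density follows from Peter--Weyl (\Cref{thm:peter-weyl}).

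The core is an induction on $|n|=\sum_j n_j$. For $n=0$ the space is $\C1\subset\mathcal A$. Let $n\neq0$. If $n\in\mathcal J_G$ then $\alpha(n)\in\mathcal F_G$ and we are done. Otherwise, by definition of $\mathcal J_G$, there is $j$ with $n_j>0$ and $\langle\alpha(n)-\alpha_j,\alpha_j\rangle_*\neq0$. Put $n'=n-e_j$, so $\alpha(n')=\alpha(n)-\alpha_j$. By the induction hypothesis $\mathcal E_{\alpha(n')}\subset\mathcal A$. Also $e_j\in\mathcal J_G$ by \Cref{lem:finite-exceptional-set}, so $\mathcal E_{\alpha_j}\subset\mathcal A$. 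Now \Cref{lem:Cartan-propagation} with $\alpha=\alpha(n')$ and $\beta=\alpha_j$ gives $\mathcal E_{\alpha(n)}\subset\mathcal A$, closing the induction.

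The step needing the most care is checking that $\mathcal A$ meets the hypotheses of \Cref{lem:Cartan-propagation}. Stability under complex $\Gamma$ follows from bilinearity, since $\Gamma(f_1+if_2,h_1+ih_2)$ expands into real $\Gamma$'s of elements of $\mathcal S_\infty$, and $\mathcal S_\infty$ is an increasing union of spaces closed under pairwise $\Gamma$. A second point to check is that the case $\alpha(n')=0$ is harmless. When $n'=0$ we have $n=e_j\in\mathcal J_G$, so that case is already covered before \Cref{lem:Cartan-propagation} is invoked.
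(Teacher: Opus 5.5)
Your proposal is correct and follows the paper's proof essentially step for step: you complexify $\mathcal S_\infty(\mathcal W_G)$, induct on $|n|$ using \Cref{lem:finite-exceptional-set} and \Cref{lem:Cartan-propagation} to obtain $\mathcal R(G)\subset\mathcal A$, then take real parts, apply \Cref{lem:recovery-signs}, and conclude density by Peter--Weyl. Your worry about $\alpha(n')=0$ is moot in any case, including $n'\neq0$: the chosen $j$ satisfies $\langle\alpha(n'),\alpha_j\rangle_*\neq0$, which already forces $\alpha(n')\neq0$.
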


\begin{proof}
Set $\mathcal A:=\mathcal S_\infty(\mathcal W_G)^\C$. It is invariant under $G\times G$ and stable under $\Gamma$. We prove $\mathcal E_{\alpha(n)}\subset\mathcal A$ by induction on $|n|:=\sum_j n_j$. The case $n=0$ is the space of constants, and the cases $n\in\mathcal J_G$ hold by construction. In particular, $\mathcal E_{\alpha_j}\subset\mathcal A$ for every $j$. If $n\neq0$ and $n\notin\mathcal J_G$, choose $j$ with $n_j>0$ and
\[\langle\alpha(n)-\alpha_j,\alpha_j\rangle_*\neq0.\]
The induction hypothesis gives $\mathcal E_{\alpha(n)-\alpha_j}\subset\mathcal A$. By \Cref{lem:Cartan-propagation}, $\mathcal E_{\alpha(n)}\subset\mathcal A$. Since $\mathcal D_G$ generates $\Lambda_G^+$, this proves $\mathcal R(G)\subset\mathcal A$. Taking real parts and applying \Cref{lem:recovery-signs} gives the inclusion. Density follows from Peter--Weyl.
\end{proof}

\begin{proof}[Proof of \Cref{thm:main-phases-G}]
Apply \Cref{thm:CP-phase} to the density statement of \Cref{thm:finite-saturation-G}.
\end{proof}
For $SO(3)$, the highest-weight labels are $\alpha_\ell=\ell u$, $\ell\in\N\cup\{0\}$, where $u$ is the positive root; see \Cref{app:weight-parameters}. Write $\mathcal E_\ell:=\mathcal E_{\pi_\ell}=\mathcal E_{\alpha_\ell}$, $c_\ell:=c_{\alpha_\ell}$, and $\nu:=\langle u,u\rangle_*>0$. Since $\rho=u/2$, the Casimir formula \eqref{eq:Casimir-eigenvalue} gives
\[c_\ell=\nu\ell(\ell+1),\qquad\langle\alpha_\ell,\alpha_j\rangle_*=\nu\ell j.\]
Taking $\mathcal D_{SO(3)}=\{u\}$, the exceptional condition becomes $(n-1)\nu=0$, so $\mathcal J_{SO(3)}=\{1\}$ and $\mathcal F_{SO(3)}=\{u\}$. Thus
\[\mathcal W_{SO(3)}=\operatorname{span}_{\R}\{1,R\mapsto R_{ab}:1\leq a,b\leq3\}.\]
The nine matrix entries span $\mathcal E_1$. Recovery of signs and \Cref{lem:Cartan-propagation} then generate $\mathcal E_2,\mathcal E_3,\ldots$, since $\langle\alpha_\ell,\alpha_1\rangle_*=\nu\ell>0$ for every $\ell\geq1$. Together with the constants, these spaces form the full representative algebra. Hence these ten potentials give phase control and, by \Cref{prop:main-global}, global $L^2$-STAC.

\begin{remark}
Here, we are not claiming that this number of controls is minimal. In fact, seven potentials suffice: $1$ and $R_{i1},R_{i2}$, $1\leq i\leq3$. Their span is invariant under left translations, so \Cref{lem:recovery-signs} applies. The identities $\Gamma(R_{i1},R_{j2})-\Gamma(R_{j1},R_{i2})=c^{-1}(R_{i1}R_{j2}-R_{j1}R_{i2})$ recover the third column, and hence generate the full coefficient space $\mathcal E_1$.
\end{remark}

\section{Finite saturation on normal homogeneous spaces}
\label{sec:finite-saturation-homogeneous}

We use the notation of \Cref{subsec:homogeneous-preliminaries}. On $G$, the left-right translates of any nonzero element of $\mathcal E_\alpha$ span the whole coefficient space. On $G/K$, the $G$-translates of a nonzero highest-weight vector span one irreducible copy, while an isotypic component may contain several such copies. To account for these multiplicities, we use generators of the full algebra $\mathcal R^+(M)$. Choose generators $q_1,\ldots,q_s$ as in \Cref{prop:highest-weight-generators}, with
highest weights $\alpha_1,\ldots,\alpha_s$.
Apply \Cref{lem:finite-exceptional-set} to these
weights with the scalar product $\langle\cdot,\cdot\rangle_*$,
and denote the resulting finite set by $\mathcal J$.
For $n\in\N_0^s$, write
\[q^n:=q_1^{n_1}\cdots q_s^{n_s},
\qquad
\alpha(n):=\sum_{j=1}^s n_j\alpha_j.\]
Define
\begin{equation}
\label{eq:control-space-homogeneous}
\mathcal W_M:=
\operatorname{span}_{\R}
\left(
\{1\}\cup
\left\{
\Re(\mathcal L_aq^n),\Im(\mathcal L_aq^n):
a\in G,\ n\in\mathcal J
\right\}
\right).
\end{equation}
This space is finite-dimensional because every $q^n$ is a representative function. It is $G$-invariant and contains $1$.

\begin{theorem}[Saturation on a normal homogeneous space]
\label{thm:finite-saturation-homogeneous}
For the space defined in \eqref{eq:control-space-homogeneous},
\[\mathcal R(M,\R)\subset\mathcal H_\infty(\mathcal W_M).\]
Consequently, $\mathcal H_\infty(\mathcal W_M)$ is dense in $L^2(M,\R)$.
\end{theorem}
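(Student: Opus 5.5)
We follow the proof of \Cref{thm:finite-saturation-G} closely. Set $\mathcal A:=\mathcal S_\infty(\mathcal W_M)^\C$. This space is invariant under the left $G$-action, because $\mathcal W_M$ is $G$-invariant and $\Gamma_m$ commutes with isometries. It is also stable under $\Gamma_m$ by construction. The aim is to prove $\mathcal R(M)\subset\mathcal A$. Once this is done, taking real parts gives $\mathcal R(M,\R)\subset\mathcal S_\infty(\mathcal W_M)$. \Cref{lem:recovery-signs} then applies with the transitive isometric action of $G$ on $M$ and yields $\mathcal R(M,\R)\subset\mathcal H_\infty(\mathcal W_M)$. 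Density follows from the density of $\mathcal R(M)$ in $L^2(M)$: since $\mathcal R(M)$ is stable under conjugation, its real parts are dense in $L^2(M,\R)$.

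First we show $q^n\in\mathcal A$ for every $n\in\N_0^s$, by induction on $|n|$. For $n=0$ the claim is $1\in\mathcal A$. For $n\in\mathcal J$, both $\Re q^n$ and $\Im q^n$ lie in $\mathcal W_M$ (take $a=1$), so $q^n\in\mathcal A$. Now let $n\neq0$ with $n\notin\mathcal J$. Choose $j$ with $n_j>0$ and $\langle\alpha(n)-\alpha_j,\alpha_j\rangle_*\neq0$, and set $n':=n-e_j$. By the label-additivity of products, $q^{n'}\in\mathcal R^+_{\alpha(n')}(M)$ and $q_j\in\mathcal R^+_{\alpha_j}(M)$. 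Then \eqref{eq:homogeneous-Cartan} gives
\[\Gamma_m(q^{n'},q_j)=-\langle\alpha(n'),\alpha_j\rangle_*\,q^{n},\]
with a nonzero coefficient. The induction hypothesis gives $q^{n'},q_j\in\mathcal A$; note that $q_j\in\mathcal A$ because standard basis vectors belong to $\mathcal J$ by \Cref{lem:finite-exceptional-set}. Since $\mathcal A$ is stable under the complex-bilinear extension of $\Gamma_m$, we conclude $q^n\in\mathcal A$. This replaces \Cref{lem:Cartan-propagation}, and we only need it at the level of individual highest-weight functions.

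Next we pass from highest-weight functions to all of $\mathcal R(M)$. By \Cref{prop:highest-weight-generators}, the monomials $q^n$ span $\mathcal R^+(M)$, so $\mathcal R^+(M)\subset\mathcal A$, and in particular $\mathcal R^+_\alpha(M)\subset\mathcal A$ for each $\alpha$. Every irreducible copy of $V_\alpha$ inside $\mathcal R_\alpha(M)$ is spanned by the $G$-translates of its highest-weight line. Moreover, $\mathcal R_\alpha(M)\simeq V_\alpha\otimes(V_\alpha^*)^K$ is the sum of such copies, and their highest-weight lines together span $\mathcal R^+_\alpha(M)$. The $G$-invariance of $\mathcal A$ then gives $\mathcal R_\alpha(M)\subset\mathcal A$, and summing over $\alpha$ gives $\mathcal R(M)\subset\mathcal A$.

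The step we expect to need the most care is this last passage. We have to check that, with the convention $(\mathcal L_af)(gK)=f(a^{-1}gK)$ and the definition of $\mathcal R^+_\alpha(M)$ in \Cref{app:homogeneous-weights}, the highest-weight vectors of $\mathcal R_\alpha(M)$ are exactly the $f_{v_\alpha,\xi}$ with $\xi\in(V_\alpha^*)^K$ and $v_\alpha$ a fixed highest-weight vector (up to the dual convention). It is also worth confirming that the complex-bilinear extension of $\Gamma_m$ preserves $\mathcal A$. This holds because $\mathcal A$ is the complexification of a real space closed under the real-bilinear form $\Gamma_m$.
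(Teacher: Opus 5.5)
Your proposal is correct and follows the paper's proof essentially step by step. You run the same induction on $|n|$ for the monomials $q^n$ via \eqref{eq:homogeneous-Cartan}, pass from $\mathcal R^+(M)$ to $\mathcal R(M)$ using $G$-invariance of $\mathcal A$ and generation of each irreducible copy by its highest-weight line, and then conclude with real parts and \Cref{lem:recovery-signs}. The convention check you flag goes through: $\mathcal L_a f_{v,\xi}=f_{\pi_\alpha(a)v,\xi}$, so the $\mathfrak n_+$-invariants in $\mathcal R_\alpha(M)\simeq V_\alpha\otimes(V_\alpha^*)^K$ are exactly $\C v_\alpha\otimes(V_\alpha^*)^K$.
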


\begin{proof}
Set $\mathcal A:=\mathcal S_\infty(\mathcal W_M)^\C$. This is a $G$-invariant complex vector space stable under $\Gamma_m$. We prove by induction on $|n|:=n_1+\cdots+n_s$ that $q^n\in\mathcal A$ for every $n\in\N_0^s$. The constant monomial belongs to $\mathcal A$. All monomials with $n\in\mathcal J$ belong to $\mathcal W_M^\C$ by construction. In particular, $q_j\in\mathcal A$, since each standard basis vector $e_j$ belongs to $\mathcal J$. If $n\neq0$ and $n\notin\mathcal J$, there exists $j$ with $n_j>0$ such that
\[\langle\alpha(n)-\alpha_j,\alpha_j\rangle_*\neq0.\]
By induction, $q^{n-e_j}\in\mathcal A$. Identity \eqref{eq:homogeneous-Cartan} gives
\[\Gamma_m(q^{n-e_j},q_j)=-\langle\alpha(n)-\alpha_j,\alpha_j\rangle_*q^n,\]
and hence $q^n\in\mathcal A$. Thus $\mathcal R^+(M)\subset\mathcal A$. Every irreducible subrepresentation of $\mathcal R(M)$ is generated by its highest-weight vector. Since $\mathcal A$ is $G$-invariant, it follows that $\mathcal R(M)\subset\mathcal A$. Taking real parts and applying \Cref{lem:recovery-signs} yields $\mathcal R(M,\R)\subset\mathcal H_\infty(\mathcal W_M)$. Density follows from Peter--Weyl theory.
\end{proof}

\begin{proof}[Proof of \Cref{thm:main-phases-homogeneous}]
Apply \Cref{thm:CP-phase} to the density statement
of \Cref{thm:finite-saturation-homogeneous}.
\end{proof}
\begin{remark}
The group case is included in the normal homogeneous setting
by taking $K=\{1\}$. One may also use the identification
\[(G\times G)/\Delta G \simeq G, \qquad [(a,b)]\longmapsto ab^{-1},\]
where $\Delta G:=\{(a,a):a\in G\}$.
The quotient metric induced by $2\overline m\oplus2\overline m$ corresponds exactly to
$\overline m$ on $G$. We present the group case separately to give a direct construction of $\mathcal W_G$ and to make the saturation argument easier to follow.
\end{remark}

\begin{remark}
The algebraic part of the saturation argument relies on finite generation of the highest-weight algebra, the interaction identity \eqref{eq:homogeneous-Cartan}, and invariance under the group action. After recovering both signs, these properties allow us to generate the representative algebra without requiring the configuration space itself to carry a group structure. This accounts for the
extension from compact Lie groups to normal homogeneous spaces.
\end{remark}
\section{From phase control to global controllability}
\label{sec:global-control-short}

Let $(M,g)$ be a smooth, connected, closed Riemannian manifold, with the potential and controls of \eqref{eq:Sch}. We adapt the phase-to-transport argument of \cite{BP25} and use the gradient generation result of \cite{KMP26}. Call a unitary operator $L^2$-STAR if its action on each initial state can be approximated, up to a constant phase, in arbitrarily small time; see \cite[Definition~5]{BP25}. This property is stable under finite compositions and strong limits whose limit is unitary \cite[Lemma~6]{BP25}. For $P\in\Diff_0(M)$ and a smooth real vector field $X$, set
\[\mathcal U_P\psi:=J_P^{1/2}(\psi\circ P),\qquad T_Xf:=Xf+\frac12(\operatorname{div}_gX)f,\quad f\in C^\infty(M,\C).\]
Here $J_P$ is the Jacobian relative to the Riemannian density $d\mu$. The operator $\mathcal U_P$ is unitary, and the characteristic formula gives $e^{sT_X}=\mathcal U_{\Phi_X^s}$, where $T_X$ denotes its skew-adjoint closure and $\Phi_X^s$ is the flow of $X$.

\begin{theorem}[STAC of gradient flows]
\label{thm:gradient-flows-short}
If \eqref{eq:Sch} satisfies STAC of phases, then $\mathcal U_{\Phi_{\nabla_g\varphi}^s}$ is $L^2$-STAR for every $\varphi\in C^\infty(M,\R)$ and $s\in\R$.
\end{theorem}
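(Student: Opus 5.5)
We plan to realize the gradient-flow transport as a limit of conjugations of the free Schrödinger group by large phases. The key identity is the following. For $\varphi\in C^\infty(M,\R)$ and $\tau>0$, write $H:=-\Delta_g+V$. Then
\[
e^{-i\varphi/\tau}\,e^{-i\tau H}\,e^{i\varphi/\tau}
=\exp\Bigl(-i\tau\, e^{-i\varphi/\tau}He^{i\varphi/\tau}\Bigr).
\]
Expanding gives
\[
e^{-i\varphi/\tau}(-\Delta_g)e^{i\varphi/\tau}f=-\Delta_g f-\frac{2i}{\tau}\langle\nabla\varphi,\nabla f\rangle_g-\frac{i}{\tau}(\Delta_g\varphi)f+\frac{1}{\tau^2}|\nabla\varphi|_g^2f .
\]
Hence the conjugated generator, multiplied by $\tau$, equals
\[
\tau(-\Delta_g+V)-2i\,T_{\nabla\varphi}+\tau^{-1}B(\varphi).
\]
Here we used $T_{\nabla\varphi}f=\langle\nabla\varphi,\nabla f\rangle+\tfrac12(\Delta_g\varphi)f$, since $\operatorname{div}\nabla\varphi=\Delta_g\varphi$. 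The two problematic terms are the $\tau^{-1}B(\varphi)$ term and the term of order $\tau$. To remove the first, we rescale differently: we conjugate by $e^{i\varphi/\sqrt\tau}$ over a time $\tau$, or equivalently use the phase $e^{is\varphi/(2\tau)}$. We also compensate the quadratic term $|\nabla\varphi|^2/\tau^2$ by an additional phase multiplication. Concretely, we will use the splitting
\[
e^{-i\frac{s\varphi}{2\tau}}\,e^{-i\tau H}\,e^{i\frac{s\varphi}{2\tau}}\;e^{i\frac{s^2}{4\tau}B(\varphi)}\approx e^{sT_{\nabla\varphi}}\quad(\tau\to0).
\]

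The steps are as follows. First, by STAC of phases and \cite[Lemma~6]{BP25}, each multiplication operator $e^{i\theta}$ with $\theta\in C^\infty(M,\R)$ is $L^2$-STAR. This covers $e^{\pm is\varphi/(2\tau)}$ and $e^{is^2B(\varphi)/(4\tau)}$. The free evolution $e^{-i\tau H}$ with $u=0$ on $[0,\tau]$ takes time $\tau$, which is arbitrarily small. Compositions of $L^2$-STAR operators are $L^2$-STAR, so every product above is $L^2$-STAR. Second, we prove strong convergence of this product to $e^{sT_{\nabla\varphi}}=\mathcal U_{\Phi^s_{\nabla\varphi}}$ as $\tau\to0$. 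Since the limit is unitary, the stability of $L^2$-STAR under strong limits \cite[Lemma~6]{BP25} concludes the proof.

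For the convergence we would follow \cite[Theorem~16]{BP25}. Write the conjugated operator as $\exp(-i\tau H_\tau)$ with
\[
\tau H_\tau=\tau(-\Delta_g+V)-is\,T_{\nabla\varphi}+\frac{s^2}{4\tau}B(\varphi).
\]
The last term is a bounded multiplication operator, so the compensating phase must cancel it. This requires a Trotter/commutator estimate, because the multiplication does not commute with $T_{\nabla\varphi}$. We therefore instead compare the product with $e^{sT_{\nabla\varphi}}$ on a dense set of smooth $\psi_0$ through a Duhamel argument. We track the solution $w(t)=e^{-is\varphi/(2\tau)}\,\cdot\,e^{-itH}e^{is\varphi/(2\tau)}\psi_0$ in the frame transported by the flow. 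The generator of this transported frame has error terms of size $O(\tau\|\psi_0\|_{H^2})$ plus contributions from $B(\varphi)$. Those contributions are removed exactly if the compensating phase is applied as the accumulated pullback $\int_0^{\tau}B(\varphi)\circ\Phi^{\sigma}\,d\sigma$. This is again a smooth real phase, so it is $L^2$-STAR.

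The main obstacle is this second step. On a general closed manifold $V$ is only $L^\infty$, so we need uniform $H^1$/$H^2$ bounds for the evolution along the transported frame. The plan is to handle $V$ as a bounded perturbation contributing $O(\tau)$ in $L^2$ directly. The remaining operator $\tau\Delta_g$ acting on $\mathcal U_{\Phi}\psi_0$ with $\psi_0$ smooth is $O(\tau)$. Unitarity together with density of $C^\infty$ in $L^2$ then upgrades the estimate to strong convergence on all of $L^2$.
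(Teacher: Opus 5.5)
Your overall architecture agrees with the paper: conjugate the free evolution by a large phase, compensate the $B(\varphi)$ term by another phase, and close under compositions and strong limits via \cite[Lemma~6]{BP25}. The gap is in your second step. One conjugated free evolution followed by one compensating phase does not converge to the transport operator. You correctly obtain
\[
e^{-i\frac{s\varphi}{2\tau}}e^{-i\tau H}e^{i\frac{s\varphi}{2\tau}}=\exp\Bigl(-i\tau H-sT_{\nabla_g\varphi}-\frac{is^2}{4\tau}B(\varphi)\Bigr),
\]
so the natural target is $e^{-sT_{\nabla_g\varphi}}$ (a harmless sign slip). However, the multiplication term has size $\tau^{-1}$ and commutes neither with $T_{\nabla_g\varphi}$ nor with $\tau\Delta_g$. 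In your Duhamel comparison the reference trajectory carries a factor $e^{-i\Theta_\sigma/\tau}$, where $\Theta_\sigma$ is $\frac{s^2}{4}$ times an integral of $B(\varphi)$ along the flow. Then $\tau\Delta_g(e^{-i\Theta_\sigma/\tau}f)$ contains the term $-\tau^{-1}|\nabla_g\Theta_\sigma|^2e^{-i\Theta_\sigma/\tau}f$. The error is therefore of order $\tau^{-1}$, not $O(\tau\|\psi_0\|_{H^2})$; your claim that $\tau\Delta_g\,\mathcal U_\Phi\psi_0=O(\tau)$ ignores this oscillating factor.

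This is not a lossy estimate; it is the Hamilton--Jacobi nonlinearity of the WKB regime. Take the flat model $H=-\partial_x^2$ on $\R$, $\varphi=x^2/2$ (so $T_{\nabla\varphi}=x\partial_x+\frac12$ and $B(\varphi)=x^2$), and $s>0$. The explicit kernel of $e^{i\tau\partial_x^2}$ gives the exact lens identity
\[
e^{-i\frac{s\varphi}{2\tau}}e^{i\tau\partial_x^2}e^{i\frac{s\varphi}{2\tau}}a=e^{-i\frac{s^2x^2}{4\tau(1+s)}}(1+s)^{-1/2}\bigl(e^{i\frac{\tau}{1+s}\partial_x^2}a\bigr)\Bigl(\frac{x}{1+s}\Bigr).
\]
So the modulus is moved by the straight-line map $x\mapsto x+s\nabla\varphi(x)=(1+s)x$, whereas $e^{-sT_{\nabla\varphi}}a=e^{-s/2}a(e^{-s}x)$ dilates by $e^{s}$. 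Phases do not change moduli, so no compensating phase can produce the gradient flow, whether $e^{\frac{is^2}{4\tau}B(\varphi)}$ or an accumulated pullback of $B(\varphi)$. With your phase, the residual factor $e^{\frac{is^3x^2}{4\tau(1+s)}}$ even makes the product converge weakly to $0$. Putting the phase first, as in your displayed splitting, gives the same behaviour with dilation factor $1+s+s^2$.

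The missing idea is to compensate infinitesimally, which is what the paper does. For fixed $\delta>0$, the identity
\[
e^{i\frac{\varphi}{2\delta}}\,i\delta(\Delta_g-V)\,e^{-i\frac{\varphi}{2\delta}}+\frac{i}{4\delta}B(\varphi)=i\delta(\Delta_g-V)+T_{\nabla_g\varphi}
\]
cancels the $\delta^{-1}$ term exactly at the level of generators. The Trotter product formula then gives
\[
\Bigl(e^{iB(\varphi)/(4\delta n)}e^{i\frac{\varphi}{2\delta}}e^{i\delta(\Delta_g-V)/n}e^{-i\frac{\varphi}{2\delta}}\Bigr)^n\longrightarrow e^{C_\delta}\quad\text{strongly as }n\to\infty,
\]
where $C_\delta$ is the skew-adjoint closure of the right-hand side and the free factors have total duration $\delta$. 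Each factor is, up to the order of its two pieces, your single step with $s=-1/n$ and $\tau=\delta/n$. Heuristically, the per-step phase mismatch $|s|^3/\tau=1/(n^2\delta)$ sums to $1/(n\delta)\to0$, which is why iterating works while a single step does not.

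The limit $\delta\downarrow0$ is then soft: $C_\delta f\to T_{\nabla_g\varphi}f$ on the common core $C^\infty(M,\C)$, so strong resolvent convergence gives $e^{C_\delta}\to e^{T_{\nabla_g\varphi}}$ strongly. This needs no $H^2$ bounds along trajectories, and $V\in L^\infty$ enters only as a bounded perturbation. Replacing $\varphi$ by $s\varphi$ covers every $s$.

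Finally, closure under composition cannot be applied to $e^{-i\tau H}$, which is not known to be $L^2$-STAR. The correct bookkeeping, as in the paper, is that the products are reachable in time $\delta+\eta$, and only their limit as $\delta\downarrow0$ is $L^2$-STAR.
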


\begin{proof}
Following \cite[Theorem~16]{BP25}, fix $\varphi\in C^\infty(M,\R)$. Then for $\delta>0$ 
\[e^{i\frac{\varphi}{2\delta}} i\delta(\Delta_g-V)e^{-i\frac{\varphi}{2\delta}}+\frac{i}{4\delta}B(\varphi)=i\delta(\Delta_g-V)+T_{\nabla_g\varphi}, \qquad \text{ on } C^\infty(M,\C).\]
Let $C_\delta$ be the closure of this operator. It is skew-adjoint, since the left-hand side is a bounded skew-adjoint perturbation of a unitary conjugate of $i\delta(\Delta_g-V)$. The Trotter--Kato formula \cite[Theorem~VIII.31]{RS72} gives
\[\left(e^{iB(\varphi)/(4\delta n)} e^{i\frac{\varphi}{2\delta}}e^{i\delta(\Delta_g-V)/n}e^{-i\frac{\varphi}{2\delta}}\right)^n\longrightarrow e^{C_\delta}\quad(n\to\infty  \text{ strongly in }L^2).\]
The phase factors are $L^2$-STAR, and the free factors have total physical duration $\delta$. Hence $e^{C_\delta}$ can be approximated in time less than $\delta+\eta$ for every $\eta>0$. Moreover, $C_\delta f\to T_{\nabla_g\varphi}f$ on the common core $C^\infty(M,\C)$, so $e^{C_\delta}\to e^{T_{\nabla_g\varphi}}$ strongly by \cite[Theorems~VIII.21 and~VIII.25(a)]{RS72}. Letting $\delta\downarrow0$ and replacing $\varphi$ by $s\varphi$ proves the claim.
\end{proof}

\begin{corollary}[STAC of diffeomorphisms]
\label{cor:flows-diffeomorphisms-short}
If \eqref{eq:Sch} satisfies STAC of phases, then $\mathcal U_P$ is $L^2$-STAR for every $P\in\Diff_0(M)$.
\end{corollary}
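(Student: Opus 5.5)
The plan is to follow the chain outlined in \Cref{fig:phase-to-global}: from gradient flows to all autonomous flows, and then to all of $\Diff_0(M)$. The first step is to show that $\mathcal U_{\Phi_X^s}=e^{sT_X}$ is $L^2$-STAR for every smooth vector field $X$ and every $s\in\R$. By \cite[Theorem~A]{KMP26}, every smooth vector field on $M$ is a finite linear combination of gradients and iterated Lie brackets of gradient fields. So it suffices to check that the class
\[\mathcal X:=\{X\in C^\infty(M,TM):\ e^{sT_X}\text{ is } L^2\text{-STAR for all } s\in\R\}\]
contains the gradients, which is \Cref{thm:gradient-flows-short}, and is closed under sums, real multiples and brackets.

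Closure under real multiples is immediate from the definition, since $s$ ranges over $\R$. For sums, note that $T_{X+Y}=T_X+T_Y$ and that all these operators are skew-adjoint with the common core $C^\infty(M,\C)$, on which $T_X+T_Y$ is essentially skew-adjoint. The Trotter product formula \cite[Theorem~VIII.31]{RS72} then gives
\[\bigl(e^{sT_X/n}e^{sT_Y/n}\bigr)^n\to e^{sT_{X+Y}}\quad\text{strongly}.\]
For brackets, one checks on smooth functions that $[T_X,T_Y]=T_{[X,Y]}$, because the divergence terms combine correctly. The group-commutator formula
\[\bigl(e^{\sqrt{s/n}T_X}e^{\sqrt{s/n}T_Y}e^{-\sqrt{s/n}T_X}e^{-\sqrt{s/n}T_Y}\bigr)^n\to e^{sT_{[X,Y]}}\]
is easiest to justify geometrically at the level of diffeomorphisms. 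The corresponding compositions of flows converge to $\Phi^s_{[X,Y]}$ in $C^1$, uniformly on the compact manifold $M$. This makes the Jacobians converge uniformly, and $\mathcal U_{P_n}\psi\to\mathcal U_P\psi$ holds first for continuous $\psi$ and then, by unitarity and density, for all $\psi\in L^2$. Since each approximating operator is a finite composition of $L^2$-STAR operators, and the class of $L^2$-STAR operators is closed under compositions and strong limits with unitary limit \cite[Lemma~6]{BP25}, it follows that $[X,Y]\in\mathcal X$.

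The second step passes from autonomous flows to arbitrary diffeomorphisms. Let $P\in\Diff_0(M)$ and choose a smooth isotopy $P_t$ from $\id$ to $P$, generated by a time-dependent vector field $X_t$. Subdividing $[0,1]$ into $N$ pieces, the composition $\Phi^{1/N}_{X_{t_{N-1}}}\circ\cdots\circ\Phi^{1/N}_{X_{t_0}}$ converges to $P$ in $C^1$ as $N\to\infty$, by the standard Euler-type estimate for ODE flows. By the continuity argument above, $\mathcal U$ of this composition converges strongly to $\mathcal U_P$. Each factor is $L^2$-STAR by the first step, so $\mathcal U_P$ is $L^2$-STAR. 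The order of composition must be matched with the convention $\mathcal U_{P\circ Q}=\mathcal U_Q\mathcal U_P$, but this is harmless.

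I expect the main obstacle to be the bracket step: justifying strong convergence of the commutator product for these unbounded generators. Working at the level of diffeomorphisms and using $C^1$ convergence, rather than operator-theoretic commutator theorems, avoids the domain issues. A secondary point is confirming that \cite[Theorem~A]{KMP26} gives generation by finitely many brackets of gradients, so that only finite iterations are needed. If it instead gives density in a Fréchet topology, an extra approximation is required: $C^1$-close vector fields yield $C^1$-close time-$s$ flows, which again gives strong convergence of the operators $\mathcal U$.
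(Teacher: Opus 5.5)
Your proposal is correct and is essentially the paper's argument. The paper cites \cite[Corollary~1]{KMP26} directly (compositions of gradient flows are dense in $\Diff_0(M)$, with $C^1$ approximations coming from the flow product formulas in its proof), whereas you rebuild that corollary from \cite[Theorem~A]{KMP26} via the Trotter and commutator product formulas and an Euler subdivision of an isotopy, and then conclude exactly as the paper does: $C^1$ convergence of maps and Jacobians gives strong convergence of $\mathcal U_{P_n}$, and $L^2$-STAR operators are stable under compositions and strong limits.
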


\begin{proof}
By \cite[Corollary~1]{KMP26}, compositions of gradient flows are dense in $\Diff_0(M)$. The flow product formulas in its proof give approximations $P_n\to P$ in $C^1$. By \Cref{thm:gradient-flows-short} and stability under composition, each $\mathcal U_{P_n}$ is $L^2$-STAR. Convergence of the maps and their Jacobians implies $\mathcal U_{P_n}\psi\to\mathcal U_P\psi$ for smooth $\psi$; unitarity extends this to all of $L^2$. Stability under strong limits concludes the proof.
\end{proof}

\begin{proof}[Proof of \Cref{prop:main-global}]
We conclude as in \cite[Theorem~12]{BP25}. Normalized states $q=a e^{i\varphi}$ with $a>0$ and $a,\varphi\in C^\infty(M,\R)$ are dense in $\S_M$, by smooth approximation of the modulus and a measurable argument. For two such states $q_j=a_je^{i\varphi_j}$, Moser's transport argument for positive densities \cite{Moser65} gives $P\in\Diff_0(M)$ satisfying
\[P^*(a_0^2d\mu)=a_1^2d\mu,\qquad e^{i(\varphi_1-\varphi_0\circ P)}\mathcal U_Pq_0=q_1.\]
The operator in the second identity is $L^2$-STAR by \Cref{cor:flows-diffeomorphisms-short} and phase control. Density and unitarity then give approximate controllability on $\S_M$ up to a constant phase, in arbitrarily small positive time; positivity of the time follows from strong continuity of the free evolution. Write $1=\sum_{j=1}^m a_jW_j$, with $a_j\in\R$. For a trajectory of duration $T>0$ and final phase $e^{i\theta}$, replacing $u_j$ by $u_j+\theta a_j/T$ multiplies the final state by $e^{-i\theta}$. This removes the constant phase without changing $T$ and yields $L^2$-STAC on $\S_M$. The density formulation of Moser's argument also applies when $M$ is nonorientable.
\end{proof}

\begin{remark}
\label{rem:alternative-density-transport-short}
Alternatively, \cite[Proposition~3.1 and Remark~3.2]{KL09} transports a positive density path by time-dependent gradients. For $\rho_t=(1-t)a_0^2+ta_1^2$, solve
\[\operatorname{div}_g(\rho_t\nabla_g f_t)=-\partial_t\rho_t,\qquad\int_M f_t\,d\mu=0.\]
Positivity and the zero-mean right-hand side give a smooth solution. Its gradient flow transports the endpoint densities; freezing the gradients approximates this flow in $C^1$ by compositions of autonomous gradient flows. Thus the preceding phase correction gives global control directly from \Cref{thm:gradient-flows-short}.
\end{remark}

\section*{Acknowledgements}
We would like to thank Felipe Labra for taking the time to read this manuscript and for his helpful comments.
\appendix
\crefalias{section}{appendix}
\crefalias{subsection}{subappendix}
\crefalias{subsubsection}{subsubappendix}
\section{Representation-theoretic details}
\label{app:representation-details}

For the reader's convenience, this appendix recalls the representation-theoretic background, fixes the normalizations, gives the explicit generating family used in \Cref{sec:finite-saturation-groups}, and proves \Cref{prop:highest-weight-generators}.

\subsection{Structure of compact Lie groups}
\label{app:weight-parameters}

We now use the structure theorem for compact connected Lie groups; see \cite[pp.~161--162]{Folland16} and \cite[Chapter~V, Theorem~8.1 and Exercise~8.7(6)]{BTD85}.

\begin{proposition}
\label{thm:structure}
Let $G$ be a compact connected Lie group. Then there exist nonnegative integers $d,r$ and a finite central covering homomorphism
\begin{equation}
\label{eq:central-cover}
\Pi:\widetilde G:=\T^d\times G_1\times\cdots\times G_r\longrightarrow G,
\end{equation}
where each $G_i$ is a compact, simply connected Lie group with simple Lie algebra. The kernel $F:=\ker\Pi$ is a finite subgroup of the center of $\widetilde G$, and $G\cong\widetilde G/F$.
\end{proposition}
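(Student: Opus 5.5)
The plan is to derive the statement from the standard structure theory of compact connected Lie groups, in three steps: split off the center, lift the semisimple part to its universal cover, and then assemble a finite covering homomorphism.

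\emph{Step 1: splitting the Lie algebra.} Since $G$ is compact, $\mathfrak g$ carries an $\Ad$-invariant inner product; one can take $\overline m_1$. The orthogonal complement of an ideal is therefore again an ideal, so $\mathfrak g$ is reductive:
\[\mathfrak g=\mathfrak z\oplus\mathfrak g_1\oplus\cdots\oplus\mathfrak g_r,\]
with $\mathfrak z$ the center and each $\mathfrak g_i$ simple. Here $\mathfrak g':=[\mathfrak g,\mathfrak g]=\bigoplus_i\mathfrak g_i$ is semisimple, and the Killing form is negative definite on it.

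\emph{Step 2: the semisimple part and the central torus.} By Weyl's theorem, the simply connected group $G_i$ with Lie algebra $\mathfrak g_i$ is compact, since its Killing form is negative definite. This is the main nontrivial input, and I would cite it, for instance from \cite[Chapter~V]{BTD85}. The simply connected group $\widetilde{G'}$ with algebra $\mathfrak g'$ is then $G_1\times\cdots\times G_r$, which is compact. Its center is finite: it is a discrete subgroup of a compact group. Let $Z_0$ be the identity component of the center of $G$. It is a compact connected abelian Lie group with Lie algebra $\mathfrak z$, hence a torus $\T^d$ with $d=\dim\mathfrak z$.

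\emph{Step 3: the covering map.} Integrate the inclusion $\mathfrak g'\hookrightarrow\mathfrak g$ to a homomorphism $\widetilde{G'}\to G$; this uses simple connectivity. Then define
\[\Pi(z,g_1,\dots,g_r):=z\,\phi(g_1\cdots g_r),\]
where $\phi:\widetilde{G'}\to G$ is the integrated homomorphism. This $\Pi$ is a homomorphism because $Z_0$ is central. The differential $d\Pi$ is the identity isomorphism $\mathfrak z\oplus\mathfrak g'\to\mathfrak g$, so $\Pi$ is a local diffeomorphism. Its image is an open subgroup of the connected group $G$, hence all of $G$.

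The kernel $F$ is discrete, being the kernel of a local isomorphism. It is compact, as a closed subgroup of the compact group $\widetilde G$. A discrete compact group is finite, so $F$ is finite. A discrete normal subgroup of a connected group is central: for fixed $f\in F$, the map $g\mapsto gfg^{-1}$ is continuous from a connected space into the discrete set $F$, so it is constant. Finally, $G\cong\widetilde G/F$ by the first isomorphism theorem for Lie groups, since $\Pi$ is a surjective smooth homomorphism.

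The main obstacle is compactness of the simply connected covers $G_i$, that is, Weyl's theorem that the fundamental group of a compact semisimple group is finite. I would not reprove it but cite it, as the statement itself does through \cite{Folland16,BTD85}.
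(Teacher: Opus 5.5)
Your proof is correct, and it is the standard argument that the paper does not reproduce but defers to by citing \cite[pp.~161--162]{Folland16} and \cite[Chapter~V, Theorem~8.1]{BTD85}: split $\mathfrak g=\mathfrak z\oplus[\mathfrak g,\mathfrak g]$, use Weyl's theorem for compactness of the simply connected simple factors, and map $Z_0\times\widetilde{G'}\to G$ with finite central kernel. The only input you take on faith, Weyl's theorem, is exactly what those references supply.
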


On $\T^d=\R^d/(2\pi\Z)^d$, the irreducible unitary representations are the characters $\chi_k([x])=e^{i\langle k,x\rangle}$, $k\in\Z^d$. For each simple factor $G_i$, fix a maximal torus $T_i$ and write $\mathfrak g_i:=\Lie(G_i)$, $\mathfrak t_i:=\Lie(T_i)$ and $\ell_i:=\dim T_i$. Its integral weight lattice is
\[P_i:=\{\lambda\in\mathfrak t_i^*: \lambda(H)\in2\pi\Z\text{ whenever }\exp_{T_i}(H)=1\}.\]
It parametrizes the characters $\chi_\lambda:T_i\to\S^1$ by $\chi_\lambda(\exp_{T_i}H)=e^{i\lambda(H)}$; see \cite[Chapter~II, Proposition~8.1]{BTD85} and \cite[Proposition~4.58]{Knapp02}. We identify roots with real linear forms on $\mathfrak t_i$ through the convention
$[H,X_\gamma]=i\gamma(H)X_\gamma$. Fix positive roots $\Phi_i^+$ for $\mathfrak g_i\otimes_\R\C$ relative to $\mathfrak t_i\otimes_\R\C$, and let $\omega_{i,1},\ldots,\omega_{i,\ell_i}$ be the corresponding fundamental weights. Since $G_i$ is simply connected, they form a $\Z$-basis of $P_i$, and the dominant integral weights are
\[P_i^+=\left\{\sum_{j=1}^{\ell_i}n_j\omega_{i,j}:n_j\in\N_0\right\};\]
see \cite[Proposition~4.59 and Theorem~5.107]{Knapp02}. For $\lambda_i\in P_i^+$, let $\pi_{i,\lambda_i}:G_i\to U(V_{i,\lambda_i})$ be the irreducible representation with highest weight $\lambda_i$. The highest-weight theorem \cite[Theorem~5.110]{Knapp02} and the representation theory of products \cite[Theorem~7.17]{Folland16} parametrize the irreducible representations of $\widetilde G$ by
\[\alpha=(k,\lambda_1,\ldots,\lambda_r)\in\Z^d\times P_1^+\times\cdots\times P_r^+.\]
The corresponding representation is
\[\widetilde\pi_{k,\lambda_1,\ldots,\lambda_r}(g_0,g_1,\ldots,g_r)=\chi_k(g_0)\pi_{1,\lambda_1}(g_1)\otimes\cdots\otimes\pi_{r,\lambda_r}(g_r),\]
with the empty tensor product interpreted as $\C$. It descends to $G$ if and only if it is trivial on $F$. Thus the admissible parameters are
\[\Lambda_G^+:=\left\{(k,\lambda_1,\ldots,\lambda_r)\in\Z^d\times P_1^+\times\cdots\times P_r^+:\widetilde\pi_{k,\lambda_1,\ldots,\lambda_r}|_F=\id\right\}.\]
For $\alpha\in\Lambda_G^+$, write $\pi_\alpha$ for the corresponding representation of $G$, $V_\alpha$ for its representation space, and $\mathcal E_\alpha:=\mathcal E_{\pi_\alpha}$.
\begin{remark}
The highest-weight theorem also parametrizes $\widehat G$
directly by the dominant characters of a maximal torus of $G$;
see \cite[Theorem~5.110]{Knapp02}.
The description through the central covering
\eqref{eq:central-cover} provides concrete coordinates for
the Casimir formula in \Cref{app:Casimir-details}
and the construction of a finite generating family
in \Cref{app:explicit-generators}.
\end{remark}
For $SO(3)$, the covering is $\Pi:SU(2)\to SO(3)$ with kernel $\{\pm I\}$. In $SU(2)$, take $T=\{t_\theta:=\operatorname{diag}(e^{i\theta},e^{-i\theta}):\theta\in\R\}$ and $\chi_\omega(t_\theta)=e^{i\theta}$. Then $\omega$ is the fundamental weight and $u:=2\omega$ the positive root. The irreducible representation $\rho_n$ of highest weight $n\omega$ has dimension $n+1$ and satisfies
\[\rho_n(-I)=(-1)^n\operatorname{Id};\]
see \cite[Proposition~4.35]{Hall15}. Hence it descends to $SO(3)$ exactly when $n$ is even. Writing $\pi_\ell\circ\Pi=\rho_{2\ell}$, we obtain
\[\Lambda_{SO(3)}^+=2\N_0\omega=\N_0u,\qquad \dim_\C V_{\pi_\ell}=2\ell+1,\]
and $\mathcal E_{2\ell\omega}=\mathcal E_{\pi_\ell}$.

\begin{remark}
\label{rem:quaternion-cover}
Realize $SU(2)\simeq\operatorname{Spin}(3)$ as the group of unit quaternions in $\mathbb H$, with $|a+bi+cj+dk|^2=a^2+b^2+c^2+d^2$; its underlying manifold is $S^3$. See \cite[Chapter~I, Sections~8 and~17]{Knapp02}. Identifying $\R^3$ with the purely imaginary quaternions gives the covering $\Pi(q)(v)=qvq^{-1}$, $v\in\operatorname{Im}\mathbb H\simeq\R^3$. Its kernel is $\{\pm1\}$, corresponding to $\{\pm I\}$ in $SU(2)$, and $S^3$ is simply connected, so this is the universal covering.
\end{remark}

\subsection{The Casimir and the Cartan component}
\label{app:Casimir-details}

The pullback metric $\widetilde m:=\Pi^*\overline m$ makes $\Pi$ a local isometry. At the identity, the tangent space is $T_1\widetilde G=\R^d\oplus\mathfrak g_1\oplus\cdots\oplus\mathfrak g_r$. Bi-invariance implies that the inner product $b:=\widetilde m_1$ decomposes as the orthogonal sum
\[b=b_0\oplus(-s_1\kappa_1)\oplus\cdots\oplus(-s_r\kappa_r),
\qquad s_i>0,\]
where $b_0$ is an inner product on $\R^d$ and $\kappa_i$ is the Killing form of $\mathfrak g_i$; see \cite[Theorem~2.45 and Proposition~2.48]{AB15}. Let $Q$ be the inner product on the toral weight space dual to $b_0$. Thus, if $b_0(u,v)=u^\top Mv$, then $Q(k,\ell)=k^\top M^{-1}\ell$. On each $\mathfrak t_i^*$, use the inner product $\langle\cdot,\cdot\rangle_i$ dual to $-\kappa_i$, and set $a_i:=s_i^{-1}$, $\rho_i:=\tfrac12\sum_{\gamma\in\Phi_i^+}\gamma$. The Casimir eigenvalue formula \cite[Proposition~5.28(b)]{Knapp02} gives $-\Delta_Gf=c_\alpha f$ for $f\in\mathcal E_\alpha$, where
\begin{equation}
\label{eq:Casimir-eigenvalue}
c_\alpha=Q(k,k)+\sum_{i=1}^r a_i
\left(\|\lambda_i+\rho_i\|_i^2-\|\rho_i\|_i^2\right),
\qquad \alpha=(k,\lambda_1,\ldots,\lambda_r).
\end{equation}
For $\alpha=(k,\lambda_1,\ldots,\lambda_r)$ and $\beta=(\ell,\mu_1,\ldots,\mu_r)$, write
\[\langle\alpha,\beta\rangle_*:=
Q(k,\ell)+\sum_{i=1}^r a_i\langle\lambda_i,\mu_i\rangle_i.\]
Choose highest-weight vectors $v_\alpha\in V_\alpha$. The tensor $v_\alpha\otimes v_\beta$ generates the irreducible component of highest weight $\alpha+\beta$, with multiplicity one, called the Cartan component; see \cite[p.~309]{Baur03}. It is trivial on $F$ when both factors are trivial on $F$, so $\alpha+\beta\in\Lambda_G^+$. Expanding \eqref{eq:Casimir-eigenvalue} gives
\begin{equation}
\label{eq:Casimir-gap}
\frac{c_\alpha+c_\beta-c_{\alpha+\beta}}2
=-\langle\alpha,\beta\rangle_*.
\end{equation}
Choose $\xi_\alpha\in V_\alpha^*$ with $\xi_\alpha(v_\alpha)=1$, and set $q_\alpha(g):=\xi_\alpha(\pi_\alpha(g)v_\alpha)$. Since $v_\alpha\otimes v_\beta$ belongs to the Cartan component $V_{\alpha+\beta}$, the product $q_\alpha q_\beta$ is a coefficient of this component, obtained by restricting $\xi_\alpha\otimes\xi_\beta$ to it. Its value at the identity is one. Hence
\begin{equation}
\label{eq:Cartan-product}
q_\alpha q_\beta\in\mathcal E_{\alpha+\beta}\setminus\{0\}.
\end{equation}
\subsection{Explicit generators of the representation parameters}
\label{app:explicit-generators}

This construction specifies the set $\mathcal D_G$ used in \Cref{sec:finite-saturation-groups}. We construct a finite generating family for $\Lambda_G^+$ using the central covering \eqref{eq:central-cover}. The finite generation of $\Lambda_G^+$ also follows from Gordan's lemma; see \cite[Theorem~2.4(ii)]{Brion10}. Let $N$ be the exponent of $F$, with $N=1$ when $F=\{1\}$. Let $e_1,\ldots,e_d$ be the standard basis of the toral character lattice, and let $\boldsymbol\omega_{i,j}$ denote the weight equal to $\omega_{i,j}$ on the $i$th simple factor and zero elsewhere. By Schur's lemma, central elements act by scalars, and these scalars multiply when highest weights are added. Since $f^N=1$ for every $f\in F$, the weights $(\pm Ne_\nu,0,\ldots,0)$ and $N\boldsymbol\omega_{i,j}$ belong to $\Lambda_G^+$. Define the admissible residues by
\[\mathcal Q_N:=\left\{
\varrho=(r_0,r_1,\ldots,r_r)\in\Lambda_G^+:
\begin{array}{l}
r_0=\displaystyle\sum_{\nu=1}^d a_\nu e_\nu,\quad
r_i=\displaystyle\sum_{j=1}^{\ell_i}a_{i,j}\omega_{i,j},\\
a_\nu,a_{i,j}\in\{0,\ldots,N-1\}
\end{array}
\right\},\]
and set
\[\mathcal D_G:=
\{(\pm Ne_\nu,0,\ldots,0):1\leq\nu\leq d\}
\cup\{N\boldsymbol\omega_{i,j}:1\leq i\leq r,\ 1\leq j\leq\ell_i\}
\cup(\mathcal Q_N\setminus\{0\}).\]
These are nonzero generators of $\Lambda_G^+$. Indeed, division of the toral coordinates and the fundamental-weight coefficients by $N$ writes any $\alpha\in\Lambda_G^+$ as
\[\alpha=\varrho+(Nq,0,\ldots,0)
+N\sum_{i,j}m_{i,j}\boldsymbol\omega_{i,j},
\qquad q\in\Z^d,\quad m_{i,j}\in\N_0.\]
The remainder $\varrho$ is admissible because the central character of the subtracted part is trivial on $F$. The signed toral generators account for every $q\in\Z^d$.  For $SO(3)$, the kernel has exponent $N=2$, there is no toral factor, and $\mathcal Q_N=\{0\}$. Hence $\mathcal D_{SO(3)}=\{2\omega\}$, with $u=2\omega$ in the example of \Cref{sec:finite-saturation-groups}. That example gives $\mathcal J_{SO(3)}=\{1\}$ and $\mathcal F_{SO(3)}=\{2\omega\}$, yielding the ten-potential family described there.

\subsection{Highest-weight functions on normal homogeneous spaces}
\label{app:homogeneous-weights}
Let $\mathfrak n_+$ be the sum of the chosen positive root spaces in $\mathfrak g_\C:=\mathfrak g\otimes_\R\C$. In the notation of \Cref{subsec:homogeneous-preliminaries}, the highest-weight space is
\[\mathcal R_\alpha^+(M):=\{f\in\mathcal R_\alpha(M):d\mathcal L(X)f=0\text{ for every }X\in\mathfrak n_+\}.\]
Infinitesimal translations act as derivations, so multiplication adds highest weights. The Laplacian product rule and \eqref{eq:Casimir-gap} then give \eqref{eq:homogeneous-Cartan}.

\begin{proof}[Proof of \Cref{prop:highest-weight-generators}]
Let $G_\C$ be the algebraic complexification of $G$, and let $K_\C$ be the Zariski closure of $K$ in $G_\C$; the latter is reductive. Restriction identifies $\C[G_\C]$ with $\mathcal R(G)$, compatibly with right translations by $K$; see \cite[Chapter~III, Section~8, Proposition~8.2 and the preceding paragraph]{BTD85}. For $f\in\C[G_\C]^K$ and fixed $g\in G_\C$, the regular function $h\mapsto f(gh)-f(g)$ vanishes on $K$, hence on $K_\C$ by Zariski density. Thus
\[\mathcal R(M)\simeq\C[G_\C]^K=\C[G_\C]^{K_\C}.\]
Hilbert's theorem for reductive invariants makes this algebra finitely generated; see \cite[p.~646]{HM73}. The semisimple Lie algebra $[\mathfrak g_\C,\mathfrak g_\C]$ acts on it locally finitely by derivations. The Lie-algebra form of the Hochschild--Mostow theorem \cite[p.~647]{HM73} gives finite generation of the subalgebra annihilated by its positive root spaces $\mathfrak n_+$, which is precisely $\mathcal R^+(M)$ \footnote{The finite generation of $\mathcal R^+(M)$ also follows from
the Hadziev--Grosshans theorem; see \cite[Theorem~2.7]{Brion10}}. For a torus, $\mathfrak n_+=0$ and $\mathcal R^+(M)=\mathcal R(M)$. Taking the finitely many weight components of each generator with respect to a maximal torus of $G$ gives generators of definite highest weight. Weight-zero generators are $G$-invariant, hence constant by transitivity, and may be discarded.
\end{proof}

\bibliographystyle{siam}
\bibliography{Bilinear_Lie}

\end{document}